\theoremstyle{plain}
\newtheorem{theorem}{Theorem}[section]
\newtheorem{lemma}[theorem]{Lemma}
\newtheorem{proposition}[theorem]{Proposition}
\newtheorem{corollary}[theorem]{Corollary}
\newtheorem{Bounded Diameter Lemma}[theorem]{Bounded Diameter Lemma}
\theoremstyle{definition}
\newtheorem{definition}[theorem]{Definition}
\newtheorem{remark}[theorem]{Remark}
\title{How many cages midscribe an egg}
\author{Jinsong Liu and Ze Zhou}
\address{HUA Loo-Keng Key Laboratory of Mathematics, Chinese Academic of Sciences, Beijing 100190, China }
\address{Institute of Mathematics, Academic of Mathematics $\&$ System Sciences,
Chinese Academic of Sciences, Beijing 100190, China}
\email{liujsong@math.ac.cn zhouze@amss.ac.cn}
\begin{document}

\maketitle

\begin{abstract}
The Midscribability Theorem, which was first proved by O. Schramm,
states that: given a strictly convex body $K\subset\mathbb{R}^{3}$
with smooth boundary and a convex polyhedron $P$, there exists a
polyhedron $Q \subset \mathbb{RP}^3$ combinatorially equivalent to $P$
which midscribes $K$. Here the word "midscribe" means that all it's
edges are tangent to the boundary surface of $K$.

By using of the intersection number technique, together with the
Teichm\"{u}ller theory of packings, this paper provides an alternative
approach to this theorem. Furthermore, combining Schramm's method
with the above ones, the authors prove a
rigidity result concerning this theorem as well. Namely, such a polyhedron is
unique under certain normalization conditions.

\bigskip
\noindent{\bf Mathematics Subject Classifications (2000):} 52B10,
52A15, 57Q99.

\bigskip
\noindent {\bf Keywords:} \, \,midscribability theorem, intersection
numbers, circle packing (pattern), Teichm\"{u}ller theory.
\end{abstract}

\setcounter{section}{-1}

\section{Introduction}\label{In}
Let $Q\subset\mathbb{R}^{3}$ be a convex polyhedron, and
$K\subset\mathbb{R}^{3}$ be a strictly convex body with smooth
boundary $\partial K$. We call $Q$ a $K$-midscribable polyhedron if
all its edges (an edge here includes the entire line where the edge
segment belongs to) are tangent to $\partial K$. In addition, when
$K=\mathbb{B}^3$ is the unit ball in $\mathbb{R}^{3}$, we often
call $Q$\, a midscribable polyhedron for short.

It follows from the  Koebe-Andreev-Thurston Theorem (i.e. Circle
Packing Theorem) \cite{And1,And2,Steph,Thu} that: for every given convex polyhedron $P$, there is a convex
midscribable polyhedron $Q$ combinatorially equivalent to $P$. Moreover, the midscribable polyhedron is unique up to M\"obius
transformations which preserve the unit sphere.

\begin{figure}[htbp]\centering
\includegraphics[width=5cm]{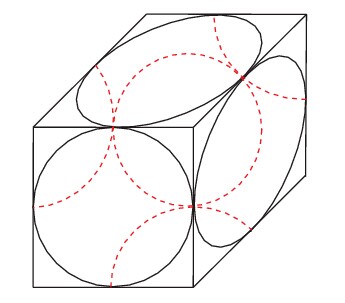}
\caption{A midscribable polyhedron } \label{convex}
\end{figure}

In fact, it's a consequence of the simultaneous realization
phenomenon of circle packings \cite{Boben}. That is, any polyhedral
graph (the $1$-skeleton of a polyhedron) and its dual graph can be
simultaneously realized  by two circle packings such that the two
tangent circles corresponding to an edge in the primal graph and
the two tangent circles corresponding to the dual of this edge are
always orthogonal to each other at the same point in Riemann sphere $\hat{\mathbb C}$.

A generalization of the above result, proposed by E. Schulte, is the
question of replacing the unit ball $\mathbb{B}^3$ by any other
strictly convex body $K \subset \mathbb{R}^3$. Namely, given the
convex body $K \subset \mathbb R^3$, for any convex polyhedron
$P$, is there a $K$-midscribable polyhedron $Q$ combinatorially
equivalent to $P$? In 1992, Schramm proved the following
Midscribability Theorem \cite{Schr2}:

\begin{theorem} \label{Mid} [Midscribability Theorem]
{\it Given a strictly convex body $K\subset\mathbb{R}^{3}$ with
smooth boundary and a convex polyhedron $P$, there exists a convex
$K$-midscribable polyhedron $Q\subset \mathbb R^3$ combinatorially
equivalent to $P$.}
\end{theorem}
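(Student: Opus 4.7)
The plan is to combine a continuity (deformation) argument with the Koebe-Andreev-Thurston theorem for the round sphere $\partial \mathbb{B}^3$, using the Teichm\"uller theory of packings to parameterize the space of midscribing configurations and intersection numbers to propagate existence along the deformation. Concretely, I would choose a smooth one-parameter family $\{K_t\}_{t\in[0,1]}$ of strictly convex bodies with smooth boundary connecting $K_0=\mathbb{B}^3$ to $K_1=K$. At $t=0$, the Koebe-Andreev-Thurston theorem provides a $K_0$-midscribable convex polyhedron $Q_0$ combinatorially equivalent to $P$, and the goal is to follow this solution as $t$ moves to $1$.

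Next I would encode the problem in terms of tangency patterns on $\partial K_t$. Every edge of a hypothetical $K_t$-midscribable polyhedron $Q$ determines a tangent line of $\partial K_t$, hence a tangent point on $\partial K_t$; every face of $Q$ cuts out a convex region on $\partial K_t$ whose boundary arcs meet at the tangent points of consecutive edges. After quotienting by the projective symmetries that preserve $K_t$, the resulting space of admissible tangency patterns with the combinatorics of $P$ forms a finite-dimensional moduli space $\mathcal{T}_t$, whose structure is governed by the Teichm\"uller theory of packings on the topological sphere $\partial K_t$. A continuous evaluation map
\[
\Phi_t \colon \mathcal{T}_t \longrightarrow \mathcal{X}
\]
records the closing-up conditions (that chosen tangent planes and lines actually assemble into a closed convex polyhedron combinatorially equivalent to $P$). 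A $K_t$-midscribable polyhedron is then precisely a preimage $\Phi_t^{-1}(\ast)$ of a distinguished point $\ast\in\mathcal{X}$ encoding the combinatorics of $P$.

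The third step is the intersection-number argument. The classical Koebe-Andreev-Thurston theorem provides a nonempty, transversal intersection at $t=0$, and the rigidity (uniqueness up to M\"obius transformations preserving $\mathbb{B}^3$) gives a well-defined signed count of $\pm 1$. Since intersection numbers are invariant under proper homotopies, the existence of $\Phi_1^{-1}(\ast)$, and hence of a $K$-midscribable polyhedron, follows provided the family $\{\Phi_t\}$ is proper along the path.

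The main obstacle I expect is establishing exactly this properness: as $t$ varies, one must rule out tangent points of distinct edges colliding, faces collapsing, or the polyhedron escaping to infinity. This is where the strict convexity and smoothness of each $\partial K_t$ will be essential, since they control how tangent lines vary with their tangent points. I would expect a Bounded Diameter-type lemma combined with uniform area and perimeter estimates on $\partial K_t$ (obtained via a compactness argument over $t\in[0,1]$) to rule out such degenerations, thereby legitimizing the intersection-number count and delivering the desired $K$-midscribable polyhedron $Q$.
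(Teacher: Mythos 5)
Your overall strategy---deforming $\mathbb{B}^3$ to $K$ through a family $\{K_t\}$ of strictly convex bodies, parameterizing tangency patterns via the Teichm\"uller theory of packings, and propagating the Koebe--Andreev--Thurston solution at $t=0$ by a homotopy-invariant intersection count---is exactly the strategy of the paper. But there is a genuine gap in your normalization step. You propose to obtain a finite-dimensional moduli space by ``quotienting by the projective symmetries that preserve $K_t$.'' At $t=0$ this group is the six-dimensional group of projective transformations preserving the ball, and KAT rigidity does make the quotiented solution a single point; but for a generic strictly convex body $K_t$ the group of projective symmetries is \emph{trivial}, so your quotient does nothing, while (as Schramm proved, and as the paper recalls) the set of $K_t$-midscribable polyhedra of the given combinatorics is a $6$-manifold. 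Hence $\Phi_t^{-1}(\ast)$ is six-dimensional for $t>0$, the dimension of your moduli space jumps at $t=0$, and no well-defined signed count of $\pm 1$ survives the homotopy. The paper's fix is to normalize by a \emph{mark} rather than a quotient: fix a combinatorial frame $\mathscr{O}$ and require the tangent points of three chosen edges to be three prescribed points $p_1,p_2,p_3\in\partial K_t$, uniformly in $t$. This kills the six-parameter freedom for every $t$ (Theorem \ref{Deform}), makes the $t=0$ solution literally unique, and yields the dimension balance $\dim \mathcal{T}_{G^{\ast}(P)}+\dim Z_P=\dim Z_{oc}$ with $\dim Z_P=|\mathcal E|+6$, which is what the intersection number $I(f_K,\Lambda,Z_P)$ requires. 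Relatedly, your claim that KAT rigidity ``gives'' transversality at $t=0$ is unjustified: uniqueness of the solution does not imply the map meets $Z_P$ transversally, and the paper must invoke Schramm's Transversality theorem (Lemma \ref{Lem6}), or alternatively the Bao--Bonahon hyperideal-polyhedra parameterization of $Z_P$ in Section 5, to get the sign $+1$.

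A second, smaller, problem is your properness step: you hope to rule out ``the polyhedron escaping to infinity'' by uniform area and perimeter estimates. Such escape can genuinely occur---for some choices of the marked points a combinatorially correct midscribing polyhedron acquires a vertex at infinity, as the paper notes already for M\"obius images in the round case---so it cannot be excluded and should not be. The paper instead compactifies: it works in $\mathbb{RP}^3$, where the configuration space $(\mathbb{RP}^3)^{|\mathcal F|}$ is compact and only the packing-side degenerations (a $K$-disk shrinking to a point, or two non-adjacent arcs of an interstice pinching together) need to be excluded; this is exactly Lemma \ref{Lem2}, and both modes are ruled out by the three-point normalization and the separating tangent edges. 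Convexity and containment in $\mathbb{R}^3$, which your scheme tries to preserve throughout the deformation, are instead recovered \emph{afterwards} by a separate visibility argument on $\partial K$ (the proof of Theorem \ref{main1}). Without the projective compactification and this final step, your plan has no proof of properness and does not deliver the convex $Q\subset\mathbb{R}^3$ asserted in the statement.
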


Besides, Schramm also proved that the space of all such
$K$-midscribable $Q$ is a $6$-manifold. For instance, when $K$ is the unit ball, Circle Pattern Theorem then implies that this space is identified with the M\"obius group $PSL(2;{\mathbb{C}})$.
Nevertheless, how to characterize this space for general convex bodies seems to
be a difficult problem. In addition, by Circle Pattern Theorem, the rigidity of the
midscribable polyhedron is valid. Analogously, it remains to consider this consequence for general convex bodies. This will be the main purpose of this paper.

In order to attain the target, one method is to introduce proper
normalization conditions. Taking Circle Packing Theorem into consideration, the rigidity property could then be restated as the uniqueness
of circle packings with the central points of three distinct circles are
fixed. We shall treat the Midsribility Theorem by
analogous way.

Given a convex polyhedron $P\subset \mathbb R^3$, we write it as $P\equiv P(\mathcal V,\mathcal E,\mathcal F)$, where $\mathcal V,\mathcal E,\mathcal F$ denote its vertex, edge, and face sets respectively. Choose $f_{0}\in \mathcal F$ and three
ordered sequential edges $e_1,e_2,e_3 \in \mathcal E$ of $f_0$.
We call such 4-tuple $\mathscr O=\{f_0,e_1,e_2,e_3\}$ a
combinatorial frame associated to $P$.

Let $Q\subset \mathbb R^3$ be a $K$-midscribable polyhedron combinatorially equivalent to $P$. The midscribability implies that there exist three tangent points $p_1,p_2,p_3$ corresponding to the edges $e_1,e_2,e_3$ respectively.
Under this convention, we call $Q$ a normalized polyhedron with mark
$\{\mathscr{O},p_1,p_2,p_3\}$.

Then the question becomes: Given a convex polyhedron $P$ with a
combinatorial frame $\mathscr O$, for any three different points
 $p_1,p_2,p_3$ on $\partial K$, is there a convex
$K$-midscribable polyhedron combinatorially equivalent to $P$ with
mark $\{\mathscr{O},p_1,p_2,p_3\}$? In addition, if the answer is
yes, could the uniqueness hold?

Return to the example of the unit ball. From Circle Packing Theorem,
it follows that there exists a convex midscribable polyhedron.
Moreover, all the other midscribable polyhedra can be transformed from
the original one by M\"obius transformations. However, it's possible
that a M\"obius transformation could turn a convex polyhedron into a
non-convex one, or a degenerate one (some points go to infinity).
That means, for some normalized conditions, the corresponding
polyhedron would lose convexity or degenerate.

To overcome these difficulties, we shall use the $3$-dimensional
projective $\mathbb{RP}^3$ instead of the Euclidean space $\mathbb
R^3$. Viewing $\mathbb R^3$ as a subset of
$\mathbb{RP}^3$, we regard the polyhedra in $\mathbb R^3$ as polyhedra in $\mathbb{RP}^3$. Fortunately, there are no pathologies for degenerating polyhedra any more. In summary, we shall prove that:
\begin{theorem}\label{main} Let $K\subset\mathbb{R}^{3}$ be a strictly convex body with
smooth boundary. Given a convex polyhedron $P$ with a combinatorial
frame $\mathscr O$, if $p_1,p_2,p_3$ are three distinct points
on $\partial K$, then there exists a unique $K$-midscribable
polyhedron $Q \subset \mathbb{RP}^3$ with mark
$\{\mathscr{O},p_1,p_2,p_3\}$ which is combinatorially equivalent to
$P$.
\end{theorem}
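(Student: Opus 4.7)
The plan is to recast Theorem \ref{main} as the statement that a natural marking map between two $6$-manifolds is a homeomorphism. Let $\mathcal{M}(P,K)$ denote the space of all convex $K$-midscribable polyhedra in $\mathbb{RP}^3$ combinatorially equivalent to $P$; by Schramm this is a smooth $6$-manifold. Using the combinatorial frame $\mathscr O=\{f_0,e_1,e_2,e_3\}$, define the marking map
\[
\Phi:\mathcal{M}(P,K)\longrightarrow (\partial K)^3\setminus\Delta,\qquad Q\longmapsto\bigl(p_1(Q),p_2(Q),p_3(Q)\bigr),
\]
where $p_i(Q)\in\partial K$ is the tangency point of the edge of $Q$ labelled $e_i$ and $\Delta$ is the fat diagonal. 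Since $\partial K$ is diffeomorphic to $\mathbb S^2$, the target is also a $6$-manifold; Theorem \ref{main} is equivalent to showing that $\Phi$ is a proper map of degree $\pm 1$ that is globally injective.

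\textbf{Baseline and degree via homotopy in $K$.} When $K=\mathbb{B}^3$, the Koebe--Andreev--Thurston theorem identifies $\mathcal{M}(P,\mathbb{B}^3)$ with $\mathrm{PSL}(2,\mathbb{C})$, which acts simply transitively on ordered triples of distinct points of $\mathbb{S}^2$; thus $\Phi_0$ is a diffeomorphism, in particular proper and of degree $\pm 1$. I would connect $\mathbb{B}^3$ to the given $K$ by a smooth path $\{K_t\}_{t\in[0,1]}$ of smooth strictly convex bodies and form the continuous family $\Phi_t$. Provided each $\Phi_t$ is proper, the Brouwer degree is a homotopy invariant, so $\deg\Phi=\pm 1$ for every $K_t$; existence of a midscribable polyhedron with prescribed mark follows.

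\textbf{Properness and uniqueness via intersection numbers.} The analytic heart is to prove $\Phi$ is proper: no sequence $Q_n\in\mathcal{M}(P,K)$ whose marks $\Phi(Q_n)$ stay in a compact subset of $(\partial K)^3\setminus\Delta$ may degenerate. Working in $\mathbb{RP}^3$ rather than $\mathbb R^3$ is what makes this feasible, as vertices that would escape to infinity remain honest projective points. The remaining potential degenerations -- collapse of a face, coalescence of two tangency points, loss of convexity -- would be ruled out by the intersection-number technique announced in the abstract: intersections between the edges of $Q_n$ and a transverse family of arcs on $\partial K$ jump discontinuously at a degeneration, whereas the combinatorics of $P$ keeps these counts constant. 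The same intersection-number bookkeeping, set in the framework of the Teichm\"uller theory of packings, then upgrades ``$\deg\Phi=\pm 1$'' to genuine injectivity: two distinct polyhedra with identical mark would produce the associated pair of Koebe-type primal-dual circle patterns on $\mathbb S^2$ with coincident three-point normalizations, and a comparison of their mutual intersection numbers would contradict the packing rigidity.

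\textbf{Main obstacle.} The hardest step is establishing properness of $\Phi$. The projective compactification kills the trivial divergences of vertices at infinity, but the subtler degenerations -- edges sliding tangentially along $\partial K$, faces shrinking, or the polyhedron ceasing to be convex in $\mathbb{RP}^3$ -- require the combined use of Schramm's continuity method with the intersection-number / circle-packing machinery. Once properness is in hand, existence is immediate from the degree computation, and uniqueness follows by reusing the same intersection-number apparatus to preclude any nontrivial fibre of $\Phi$.
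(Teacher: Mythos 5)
Your high-level skeleton---interpolate from $\mathbb{B}^3$ to $K$ through a family of strictly convex bodies, use the Koebe--Andreev--Thurston case as the baseline, and transfer the count by a homotopy-invariance argument---is indeed the paper's skeleton. But your setup has a concrete defect, and the two decisive steps are asserted rather than proved. First, the setup: if $\mathcal{M}(P,K)$ consists of \emph{convex} midscribable polyhedra, then $\Phi$ is not proper, since Möbius-type deformations push a vertex to infinity while the three tangency points stay in a compact subset of $(\partial K)^3\setminus\Delta$; this is precisely why the paper works in $\mathbb{RP}^3$ and why its Theorem \ref{main1} only claims the convex ones form a nonempty \emph{open} subset of $\mathfrak{U}_{P,K}$. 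The paper avoids your marking map altogether: it fixes the mark $\{\mathscr{O},p_1,p_2,p_3\}$ once and for all and uses the Teichm\"uller theory of $K$-circle packings (Theorem \ref{Deform}, whose uniqueness half rests on the Rigidity Lemma \ref{Lem7}, proved by the fixed-point index) to define a map $f_K:\mathcal{T}_{G^*(P)}\to Z_{oc}$ into the configuration space of face-planes, and then counts intersections with the submanifold $Z_P$, which has complementary dimension ($\dim \mathcal{T}_{G^*(P)}+\dim Z_P=3|\mathcal F|$). Your properness step corresponds to the paper's Lemma \ref{Lem2}, which is proved by packing estimates: no disk can shrink to a point because three $K$-disks with disjoint interiors cannot meet at a common point (together with the normalization), and non-adjacent interstice arcs cannot pinch because the tangent edges of the associated polyhedron separate them while disk sizes stay bounded below. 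Your proposed mechanism---intersection counts of edges against ``a transverse family of arcs on $\partial K$'' that ``jump discontinuously at a degeneration''---is not an argument and has no counterpart in any workable proof I can see; nothing in it identifies which quantity is conserved or why.

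The second, and more serious, gap is the uniqueness step: a proper map of degree $\pm 1$ need \emph{not} be injective, since extra preimages can occur in pairs of opposite sign, so your ``upgrade'' from degree to global injectivity is a non sequitur as stated. The paper closes exactly this hole with Schramm's Transversality Theorem (Lemma \ref{Lem6}): since $f_{K_s}\pitchfork Z_P$ for every $s$, all intersection points count with the same sign, the cardinality $|f_{K_s}^{-1}(Z_P)\cap\Lambda|$ is constant along the homotopy, and it equals $1$ at $s=0$ by the Circle Pattern Theorem; this simultaneously yields $I(f_K,\Lambda,Z_P)=1$ (Theorem \ref{Thm2}, hence existence) and uniqueness. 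Your appeal to ``packing rigidity'' cannot substitute for this: in the paper the Rigidity Lemma enters only to show that the normalized $K$-circle packing realizing a prescribed interstice complex structure is unique, i.e., that $f_K$ is well defined---it does not rule out two distinct midscribable polyhedra with the same mark, because those would correspond to two \emph{different} points of $\mathcal{T}_{G^*(P)}$ landing in $Z_P$. To repair your argument you would need either Schramm's transversality input or some positivity-of-sign statement for all preimages, neither of which your sketch supplies.
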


Denote by $\mathfrak{U}_{P,K}$ the space of such $K$-midscribable
polyhedra in $\mathbb{RP}^3$. Owing to the preceding theorem,
we could identify $\mathfrak{U}_{P,K}$ with the set of distinct points
triple $\{(p_1,p_2,p_3)\}$. It means that $\mathfrak{U}_{P,K}$ is
homeomorphic to the M\"{o}bius group. That is to say
$$
\mathfrak{U}_{P,K}\cong PSL(2;{\mathbb{C}})=Iso^{+}(\mathbb{B}^{3}),
$$
which is a 6-dimensional manifold. Let's consider the subset
$\mathfrak{U}_{P,K}^{c} \subset \mathfrak{U}_{P,K}$, which consists
of all convex $K$-midscribable polyhedra in $\mathbb R^3$. We
have:

\begin{theorem}\label{main1} $\mathfrak{U}_{P,K}^{c}$
is a non-empty open subset of  $\mathfrak{U}_{P,K}$.
\end{theorem}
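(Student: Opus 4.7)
The plan is to deduce non-emptiness directly from Schramm's Midscribability Theorem and to obtain openness by combining continuous dependence of $Q$ on its mark (deduced from the uniqueness in Theorem \ref{main}) with the fact that being convex and lying entirely in the affine chart $\mathbb{R}^{3}\subset\mathbb{RP}^{3}$ is an open condition on the space of polyhedra combinatorially equivalent to $P$.

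For non-emptiness, I apply Theorem \ref{Mid} to produce a convex $K$-midscribable polyhedron $Q_{0}\subset\mathbb{R}^{3}$ combinatorially equivalent to $P$. A choice of combinatorial isomorphism identifies the distinguished edges $e_{1},e_{2},e_{3}\in\mathscr{O}$ with three edges of $Q_{0}$, tangent to $\partial K$ at distinct points $p_{1}^{0},p_{2}^{0},p_{3}^{0}$. By the uniqueness in Theorem \ref{main}, $Q_{0}$ is the unique element of $\mathfrak{U}_{P,K}$ with mark $\{\mathscr{O},p_{1}^{0},p_{2}^{0},p_{3}^{0}\}$, so $Q_{0}\in\mathfrak{U}_{P,K}^{c}$.

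For openness, I first upgrade the bijection $\Phi$ from the space of triples of distinct points on $\partial K$ to $\mathfrak{U}_{P,K}$ (given by Theorem \ref{main}) to a homeomorphism, with $\mathfrak{U}_{P,K}$ topologised by Hausdorff distance in the compact space $\mathbb{RP}^{3}$. Given marks $(p_{1}^{n},p_{2}^{n},p_{3}^{n})\to(p_{1},p_{2},p_{3})$, any subsequence of $Q_{n}=\Phi(p_{1}^{n},p_{2}^{n},p_{3}^{n})$ admits a Hausdorff-convergent further subsequence $Q_{n_{k}}\to Q_{\infty}$; tangency to $\partial K$ is closed and the three distinguished tangent points converge to distinct limits, so $Q_{\infty}$ is a $K$-midscribable polyhedron combinatorially equivalent to $P$ with mark $(p_{1},p_{2},p_{3})$, and uniqueness forces $Q_{\infty}=\Phi(p_{1},p_{2},p_{3})$. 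With $\Phi$ continuous, fix $Q\in\mathfrak{U}_{P,K}^{c}$ with mark $(p_{1},p_{2},p_{3})$: membership in $\mathfrak{U}_{P,K}^{c}$ amounts to every vertex of $Q$ lying in $\mathbb{R}^{3}$ and every dihedral angle of $Q$ being strictly less than $\pi$, both of which are open conditions on polyhedra combinatorially equivalent to $P$; therefore a neighborhood of $(p_{1},p_{2},p_{3})$ is carried by $\Phi$ entirely into $\mathfrak{U}_{P,K}^{c}$.

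The main obstacle is the continuity step, specifically ruling out combinatorial degenerations of the Hausdorff limit $Q_{\infty}$: two vertices merging, an edge collapsing to a point, a face flattening, or a vertex escaping through the plane at infinity. The normalization pins down only the three edges $e_{1},e_{2},e_{3}$, so degenerations away from them are a priori possible. Eliminating them will likely require invoking the $6$-manifold structure on $\mathfrak{U}_{P,K}$ produced earlier in the paper together with a properness-type argument for the forgetful map $Q\mapsto(p_{1},p_{2},p_{3})$, or alternatively reusing the intersection-number techniques employed in the authors' proof of Theorem \ref{main}.
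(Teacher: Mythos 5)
Your proposal diverges from the paper on both halves of the statement, and each divergence matters. For non-emptiness you invoke Theorem \ref{Mid}, but within this paper that is circular relative to its architecture: Theorems \ref{main} and \ref{main1} are precisely the vehicle by which the authors give their \emph{alternative} proof of the Midscribability Theorem (see the end of Section 5, where the intersection-number computation ``implies the existence part of Theorem \ref{main} and Theorem \ref{main1}''), so quoting \ref{Mid} here is only legitimate as an appeal to Schramm's external 1992 proof, which the paper is constructed to avoid. The paper's own non-emptiness argument is internal and quite different: starting from the marked polyhedra $Q(p_1,p_2,p_3)\subset\mathbb{RP}^3$ already supplied by Theorem \ref{main}, it considers for each vertex $x(v)$ the visibility disk $O_{x(v)}\subset\partial K$, notes that $\{O_{x(v)}\}_{v\in\mathcal V}$ is a packing with nerve $G(P)$, and chooses the mark so that $x(v_0)$ lies on the plane at infinity $\mathbb{RP}^3\setminus\mathbb{R}^3$; then $O_{x(v)}\varsubsetneqq \partial K\setminus O_{x(v_0)}$ forces every other vertex into $\mathbb{R}^3$, which places this configuration in $\partial\mathfrak{U}^{c}_{P,K}$, and a set with non-empty boundary is non-empty.

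For openness your plan is closer in spirit but is genuinely incomplete at exactly the step you flag yourself: nothing in your Blaschke-type compactness argument prevents the Hausdorff limit $Q_\infty$ from degenerating (vertices merging, an edge or face collapsing, a vertex escaping to infinity), and the uniqueness of Theorem \ref{main} only applies once you already know $Q_\infty$ is a polyhedron combinatorially equivalent to $P$ carrying the limit mark; the sentence ``will likely require invoking the $6$-manifold structure \dots or reusing the intersection-number techniques'' is a placeholder, not a proof. The exclusion you need does exist in the paper's toolkit --- the arguments of Lemma \ref{Lem2} and Proposition \ref{Prop2} (three $K$-disks with disjoint interiors cannot meet at a point, so the normalization by three distinct tangent points forbids circles degenerating; tangent edges separate non-adjacent interstice arcs) adapt to a sequence of converging marks --- but you would have to carry that adaptation out. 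The paper itself sidesteps the whole limit analysis: openness is stated as a direct corollary of the Transversality theorem (Lemma \ref{Lem6}); since $f_{K}\mbox{\Large$\pitchfork$}Z_P$ in complementary dimensions, the implicit function theorem gives smooth local dependence of the midscribing configuration on the three marked points, after which your (correct) final observation --- that convexity together with lying in the affine chart $\mathbb{R}^3$ is an open condition on nondegenerate configurations --- finishes the argument. So: your openness outline is salvageable but missing its key lemma, and your non-emptiness should be replaced by the paper's boundary argument unless you are content to black-box Schramm.
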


The proofs are briefly sketched as follows.

Let $K$ be the above convex body. Given an affine half space $H^+$, then the intersection $H^+ \cap \partial K$ is either empty, or a point, or a topological disk. For the last case, we call it a $K$-disk, and its boundary (in $\partial K$) a $K$-circle.

   Recall that $P$ is a convex polyhedron. By associating every face of $P$ with an affine half space, we then obtain a so-called configuration. Denote by $Z$ the space of all such configurations. Then we are interested in two subspaces of $Z$. One is $Z_P$, which represents the space of configurations corresponding to polyhedra which are combinatorially equivalent to $P$. The other is $Z_K$, which consists of configurations corresponding to $K$-circle packings whose contact graph are isomorphic to $G^{\ast}(P)$ ( the dual graph of the $1$-skeleton of $P$).

 In terms of these notions, to find a $K$-midscribable polyhedron means to find a point in the intersection $Z_P\cap Z_K$. In other words, to prove the Midcribility Theorem is equivalent to prove that $Z_P\cap Z_K$ is non-empty.

 By combining the intersection
number theory and a homotopy technique, we will obtain the desired result.

\bigskip
{\bf Notational Conventions}.

Through this paper, for any given set $A$, we use the notation $|A|$ to denote the cardinality of $A$.

\bigskip
\section{Preliminaries}\label{Pn}

Differential topology, especially transversality and intersection
number theory, will play an important part in this paper. In this
section, let's give a simple introduction to them. The reader is
referred to \cite{Gui, Hir} for a detailed exposition of the general
theory of differential topology.

The first topic is transversality, according to H.E.Winkelnkemper,
which is said to unlock the secrets of the manifolds
(see Chap.3 in \cite{Hir}). Indeed, it plays a significant role throughout the paper \cite{Schr2}.

Suppose $M,N$ are two oriented smooth manifolds, and $S\subset N$ is a submanifold.

\begin{definition}
Assume that $f:M\rightarrow N$ is a ${C}^{1}$ map. Given $A\subset M$, we say $f$ is transverse to $S$ along $A$,
denoted by $f\mbox{\Large $\pitchfork$}_{A}S$, if
$$
Im(df_{x})+T_{f(x)}S=T_{f(x)}N
$$
whenever $x\in A\cap f^{-1}(S)$. When $A=M$, we simply denote
$f\mbox{\Large $\pitchfork$} S$.
\end{definition}

The other notion is the intersection number.
Let $S\subset N$ be a closed submanifold such that
$$
dim M + dim S = dim N.
$$
 Suppose $\Lambda \subset M$ is an open subset with compact closure $\bar{\Lambda}$. Given a continuous map $f:M\rightarrow N$ such that $f(\partial \Lambda)\cap S=\emptyset$, where $\partial\Lambda=\bar{\Lambda}\setminus\Lambda$, we will define a topological invariant $I(f,\Lambda,S)$, called the intersection number between $f$ and $S$ in $\Lambda$.

 If $f\in C^{0}(\bar{\Lambda},N)\cap C^{\infty}(\Lambda,N)$ such that $ f\mbox{\Large $\pitchfork$}_{\Lambda}S$,
then $\Lambda \cap f^{-1}(S)$ consists of finite points. For each $x
\in \Lambda \cap f^{-1}(S)$, the $sgn(f,S)_{x}$
at $x$ is $+1$, if the orientations on $Im(df_{x_{j}})$ and
$T_{f(x_j)}S$ "add up" to preserve the prescribed orientation on
$N$, and $-1$ if not.

\begin{definition}
If $\Lambda \cap f^{-1}(S)=\{x_1,x_2,\cdots,x_m\}$, then we define
the intersection number between $f$ and $S$ in $\Lambda$ to be
$$
I(f,\Lambda,S):=\sum\nolimits_{j=1}^m sgn(f,S)_{x_j}.
$$
\end{definition}

The proof of the following proposition is in the same style as that
of the homotopy invariance of Brouwer degree. Please see Milnor's
book \cite{Mil}.

\begin{proposition}\label{Prop1}
Suppose that $f_{i}\in C^{0}(\bar{\Lambda},N)\cap
C^{\infty}(\Lambda,N)$, $f_{i}\mbox{\Large $\pitchfork$}_{\Lambda}S
$ and $f_{i}(\partial \Lambda)\cap S=\emptyset, \:i=0,1$. If there
exists a homotopy
$$
H \in C^{0}({I\times \bar\Lambda},N)
$$
such that $H(0, \cdot)=f_0(\cdot), \:\: H(1, \cdot)=f_1(\cdot)$, and
 $H(I\times\partial \Lambda)\cap S=\emptyset$, then
$$
I(f_{0},\Lambda,S)=I(f_{1},\Lambda,S).
$$
\end{proposition}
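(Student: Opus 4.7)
The plan is to adapt Milnor's cobordism argument for the homotopy invariance of the Brouwer degree to the present setting, where $S$ has complementary dimension to $M$ instead of being a point. The structural idea is to produce inside $I\times\bar\Lambda$ a compact oriented $1$-manifold $W$ whose signed boundary equals $I(f_1,\Lambda,S)-I(f_0,\Lambda,S)$; since every compact oriented $1$-manifold has vanishing signed boundary, the desired equality will follow.

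First I would upgrade the continuous homotopy $H$ to a map with good transversality. Because $I\times\partial\Lambda$ is compact and $S$ is closed, the hypothesis $H(I\times\partial\Lambda)\cap S=\emptyset$ yields an open neighborhood $U\supset I\times\partial\Lambda$ in $I\times\bar\Lambda$ with $H(U)\cap S=\emptyset$. Using the relative form of Thom's transversality theorem (cf.\ \cite{Hir}, Ch.~3), I would produce a map $\tilde H:I\times\bar\Lambda\to N$, continuous up to the boundary and smooth on $I\times\Lambda$, such that $\tilde H(0,\cdot)=f_0$, $\tilde H(1,\cdot)=f_1$, $\tilde H$ coincides with $H$ on a smaller neighborhood of $I\times\partial\Lambda$ (so in particular $\tilde H(I\times\partial\Lambda)\cap S=\emptyset$), and $\tilde H\mbox{\Large $\pitchfork$}_{I\times\Lambda}S$. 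Because $\dim M+\dim S=\dim N$, the preimage
\[
W:=\tilde H^{-1}(S)\cap(I\times\bar\Lambda)
\]
is a compact smooth $1$-manifold, and the transversality of $f_0,f_1$ together with the boundary avoidance on $I\times\partial\Lambda$ gives
\[
\partial W=\bigl(\{0\}\times f_0^{-1}(S)\bigr)\cup\bigl(\{1\}\times f_1^{-1}(S)\bigr).
\]

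Next I would orient $W$ by pulling back from the orientations of $S$, $N$ and the standard orientation of $I$, and verify the sign bookkeeping at the boundary. The Stokes-type convention gives that the induced boundary orientation at $(1,x)\in\{1\}\times f_1^{-1}(S)$ equals $\operatorname{sgn}(f_1,S)_x$, while at $(0,x)\in\{0\}\times f_0^{-1}(S)$ it equals $-\operatorname{sgn}(f_0,S)_x$; the sign flip reflects the fact that the outward normal to $I\times\bar\Lambda$ at $t=0$ points in the $-\partial_t$ direction. The classification theorem says $W$ is a finite disjoint union of arcs and circles, and the two endpoints of each arc carry opposite induced boundary signs, so summing over $\partial W$ produces
\[
\sum_{x\in f_1^{-1}(S)\cap\Lambda}\operatorname{sgn}(f_1,S)_x\;-\;\sum_{x\in f_0^{-1}(S)\cap\Lambda}\operatorname{sgn}(f_0,S)_x=0,
\]
which is exactly $I(f_0,\Lambda,S)=I(f_1,\Lambda,S)$.

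The main obstacle is the relative smoothing step producing $\tilde H$: one must achieve transversality on $I\times\Lambda$ \emph{while} respecting three pinned pieces of data, namely the maps $f_0,f_1$ at $t=0,1$ and the forbidden-contact condition near $\partial\Lambda$. A standard cutoff argument, supported in the complement of $U\cup(\{0,1\}\times\bar\Lambda)$ and combined with a $C^0$-small perturbation of $H$, accomplishes this because $S$ is closed so the condition ``image lies off $S$'' is open and therefore survives a sufficiently small perturbation on $U$. Once $\tilde H$ is in hand, the orientation and sign verification on the $1$-manifold $W$ is routine and appears in this form in \cite{Mil}.
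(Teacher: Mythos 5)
Your proposal is correct and is precisely the argument the paper has in mind: the paper gives no written proof of Proposition \ref{Prop1}, remarking only that it is ``in the same style as the homotopy invariance of Brouwer degree'' with a citation to Milnor, and your cobordism argument---smoothing $H$ relative to the pinned data, taking $W=\tilde H^{-1}(S)$ as a compact oriented $1$-manifold (compact because the forbidden-contact neighborhood of $I\times\partial\Lambda$ keeps $W$ inside $I\times\Lambda$), and reading off the signed boundary---is exactly that argument, with the relative transversality and sign conventions handled correctly. No genuinely different route is taken, so there is nothing to compare.
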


The next lemma, which helps us to manipulate the intersection number for
general mappings, is a consequence of Sard's theorem \cite{Gui,Hir}.
\begin{lemma}\label{Lem1}
For any $f\in C^{0}(\bar{\Lambda},N)$ with $f(\partial \Lambda)\cap
S=\emptyset$, there exists $g\in C^{0}(\bar{\Lambda},N)\cap
C^{\infty}(\Lambda,N)$ and $H\in C^{0}(I\times\bar{\Lambda},N)$ such
that
\begin{itemize}
\item[$(1)$] $g\mbox{\Large $\pitchfork$}_{\Lambda}S$;
\item[$(2)$] $H(0,\cdot)=f(\cdot),H(1,\cdot)=g(\cdot)$;
\item[$(3)$] $H(I\times\partial \Lambda)\cap S=\emptyset$.
\end{itemize}
\end{lemma}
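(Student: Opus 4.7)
The plan is to produce $g$ in two stages: first approximate $f$ by a map which is smooth on $\Lambda$ and continuously extends to $\bar\Lambda$, while preserving the key property that $\partial\Lambda$ is mapped off of $S$; then perturb this smooth approximation on the interior to achieve transversality to $S$. The construction hinges on creating a ``buffer zone'' near $\partial\Lambda$ on which nothing is changed.

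Step 1 (Buffer near the boundary). Since $\bar\Lambda$ is compact, $f(\partial\Lambda)$ is a compact set disjoint from the closed set $S\subset N$, so there is a relatively open neighborhood $U$ of $\partial\Lambda$ in $\bar\Lambda$ with $\overline{f(U)}\cap S=\emptyset$. Choose a smaller open neighborhood $U'$ of $\partial\Lambda$ with $\overline{U'}\subset U$.

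Step 2 (Smoothing rel the buffer). Embed $N\hookrightarrow \mathbb{R}^{k}$ (Whitney) and fix a tubular neighborhood with smooth retraction $\pi:\mathcal{N}(N)\to N$. Using a smooth partition of unity on $\Lambda$ subordinate to coordinate charts, together with a local convolution smoothing in $\mathbb{R}^{k}$, construct a continuous map $\tilde f:\bar\Lambda\to\mathbb{R}^{k}$ which is smooth on $\Lambda$, equals $f$ on $\overline{U'}$, and is uniformly as close to $f$ on $\bar\Lambda$ as we wish; this is the standard relative Whitney approximation argument. Set $g_{0}=\pi\circ\tilde f$. For a sufficiently fine approximation, $g_{0}\in C^{0}(\bar\Lambda,N)\cap C^{\infty}(\Lambda,N)$, $g_{0}|_{\overline{U'}}=f|_{\overline{U'}}$, and the straight-line homotopy $H_{0}(t,x)=\pi\bigl((1-t)f(x)+t\tilde f(x)\bigr)$ inside $\mathcal{N}(N)$ is a continuous homotopy from $f$ to $g_{0}$ that is stationary on $\overline{U'}$.

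Step 3 (Transversality and combining). Apply Thom's parametric transversality theorem to $g_{0}$ on the open set $\Lambda$, using a finite-dimensional family of smooth perturbations supported in the compact set $\bar\Lambda\setminus\overline{U'}$. Because the dimension condition $\dim M+\dim S=\dim N$ makes the transversality condition generic, this produces $g\in C^{0}(\bar\Lambda,N)\cap C^{\infty}(\Lambda,N)$ that is arbitrarily $C^{0}$-close to $g_{0}$, coincides with $g_{0}$ (hence with $f$) on $\overline{U'}$, and satisfies $g\mbox{\Large $\pitchfork$}_{\Lambda}S$. A straight-line homotopy $H_{1}$ from $g_{0}$ to $g$, again realized through $\pi$ in the tubular neighborhood, is stationary on $\overline{U'}$. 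Concatenating, $H(t,x)=H_{0}(2t,x)$ for $t\in[0,\tfrac12]$ and $H(t,x)=H_{1}(2t-1,x)$ for $t\in[\tfrac12,1]$ gives a continuous homotopy from $f$ to $g$ with $H(t,x)=f(x)$ for every $x\in\overline{U'}\supset\partial\Lambda$; in particular $H(I\times\partial\Lambda)=f(\partial\Lambda)$ is disjoint from $S$.

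The main obstacle is coordinating the three conditions simultaneously: smoothness on $\Lambda$, transversality to $S$, and the disjointness of the homotopy with $S$ on $\partial\Lambda$. The entire trick is to do everything \emph{relative to} $\overline{U'}$, so that both the smoothing step and the transverse perturbation leave $f$ untouched in a neighborhood of the boundary, which makes condition (3) automatic from condition (1) of the buffer.
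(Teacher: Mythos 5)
There is a genuine gap in your Step 2, and it propagates to Step 3: the two properties you demand of $\tilde f$ --- smooth on $\Lambda$ and \emph{equal} to $f$ on $\overline{U'}$ --- are mutually inconsistent. Since $U'$ is a neighborhood of $\partial\Lambda$ in $\bar\Lambda$, the set $U'\cap\Lambda$ is a nonempty open subset of $\Lambda$, and on it $\tilde f=f$ is merely continuous; hence $\tilde f$ (and then $g$, which also coincides with $f$ on $\overline{U'}$) cannot lie in $C^{\infty}(\Lambda,N)$, which is exactly what the lemma requires. Relative Whitney approximation lets you freeze a map on a closed set only where it is \emph{already} smooth; here $f$ is only $C^{0}$, so the ``rel buffer'' device is unavailable. (Transversality itself would be vacuous near the boundary since $g(\overline{U'})$ misses $S$, but the smoothness requirement still fails. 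A minor additional inaccuracy: genericity of transversality via Thom's theorem does not depend on the dimension condition $\dim M+\dim S=\dim N$; that condition only makes the transverse preimage zero-dimensional.)

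The repair is to abandon stationarity and use smallness instead, which is the standard argument the paper has in mind when it calls the lemma ``a consequence of Sard's theorem'' (it offers no proof, citing Guillemin--Pollack and Hirsch). Fix a metric on $N$ and let $3\epsilon<\mathrm{dist}\bigl(f(\partial\Lambda),S\bigr)>0$, which is positive since $f(\partial\Lambda)$ is compact and $S$ is closed. Approximate $f$ uniformly within $\epsilon$ by $g_{0}\in C^{0}(\bar\Lambda,N)\cap C^{\infty}(\Lambda,N)$ on \emph{all} of $\bar\Lambda$ (your Whitney embedding, convolution, and tubular retraction $\pi$ work verbatim once the equality constraint is dropped), then perturb within $\epsilon$ to $g$ with $g\pitchfork_{\Lambda}S$ via parametric transversality. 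Your straight-line homotopies through $\pi$ then move every point of $\partial\Lambda$ by less than roughly $2\epsilon$ (after shrinking $\epsilon$ so the segments stay in the tube and $\pi$ distorts distances by a bounded factor on the compact set $f(\bar\Lambda)$), so $H(I\times\partial\Lambda)$ stays in the $3\epsilon$-neighborhood of $f(\partial\Lambda)$ and misses $S$; condition $(3)$ is automatic from smallness rather than from stationarity. With this change the rest of your architecture --- embedding, retraction, two-stage homotopy concatenation --- is correct and complete.
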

The above lemma, together with Proposition \ref{Prop1}, allows one
to define the intersection numbers for general continuous mappings.

\begin{definition}
For any $f\in C^{0}(\bar{\Lambda},N)$ with $f(\partial \Lambda)\cap
S=\emptyset$, we can define the intersection number
$$
I(f,\Lambda,S)=I(g,\Lambda,S).
$$
\end{definition}
By Proposition \ref{Prop1}, $I(f,\Lambda,S)$ is well-defined.
Furthermore, we have the following homotopy invariance property of
this quantity.
\begin{theorem}\label{Thm1}
For $i=0,1$, suppose that $f_{i}\in C^{0}(\bar{\Lambda},N)$ such
that $f_{i}(\partial \Lambda)\cap S=\emptyset$. If there exists
$H\in C^{0}(I\times\bar{\Lambda},N)$ such that
\begin{itemize}
\item[$(1)$] $H(0,\cdot)=f_{0}(\cdot),H(1,\cdot)=f_{1}(\cdot)$;
\item[$(2)$] $H(I\times\partial \Lambda)\cap S=\emptyset$.
\end{itemize}
Then we have $I(f_{0},\Lambda,S)=I(f_{1},\Lambda,S)$.
\end{theorem}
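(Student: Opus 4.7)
The plan is to reduce Theorem \ref{Thm1} to Proposition \ref{Prop1} by approximating $f_0$ and $f_1$ by smooth maps that are transverse to $S$, then stitching the given continuous homotopy together with the approximation homotopies into a single continuous homotopy between the smooth approximations.

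First I would apply Lemma \ref{Lem1} to each $f_i$ ($i=0,1$) to produce maps $g_i \in C^0(\bar\Lambda,N)\cap C^\infty(\Lambda,N)$ with $g_i \mbox{\Large $\pitchfork$}_\Lambda S$, together with homotopies $H_i \in C^0(I\times\bar\Lambda,N)$ satisfying $H_i(0,\cdot)=f_i$, $H_i(1,\cdot)=g_i$, and $H_i(I\times\partial\Lambda)\cap S = \emptyset$. By the definition immediately preceding the theorem, this gives $I(f_i,\Lambda,S) = I(g_i,\Lambda,S)$.

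Next, I would concatenate $H_0$ (reversed), the given $H$, and $H_1$ into a single continuous map $\tilde H \in C^0(I\times\bar\Lambda,N)$ linking $g_0$ to $g_1$; explicitly,
$$
\tilde H(t,x) = \begin{cases} H_0(1-3t,x), & t\in[0,1/3], \\ H(3t-1,x), & t\in[1/3,2/3], \\ H_1(3t-2,x), & t\in[2/3,1]. \end{cases}
$$
The three pieces agree at $t=1/3$ and $t=2/3$, so $\tilde H$ is continuous; moreover each piece avoids $S$ on $\partial\Lambda$, so $\tilde H(I\times\partial\Lambda)\cap S = \emptyset$. Since $g_0,g_1$ are smooth and transverse to $S$ on $\Lambda$, Proposition \ref{Prop1} applies to $\tilde H$ and gives $I(g_0,\Lambda,S) = I(g_1,\Lambda,S)$. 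Combining with $I(f_i,\Lambda,S) = I(g_i,\Lambda,S)$, we conclude $I(f_0,\Lambda,S) = I(f_1,\Lambda,S)$.

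There is no genuine obstacle here once Lemma \ref{Lem1} and Proposition \ref{Prop1} are in hand; the substantive analytic content (Sard's theorem for the approximation, and the mod-2/signed cobordism argument for smooth homotopy invariance) has already been absorbed into those two statements. The only thing one must be careful about is the well-definedness of $I(f,\Lambda,S)$ for continuous $f$, i.e.\ independence of the choice of approximation $(g,H)$ supplied by Lemma \ref{Lem1}; but this is exactly the special case of the present theorem in which $f_0=f_1=f$ and the given $H$ is the constant homotopy, and it is proved by the same concatenation argument. So conceptually the proof is just bookkeeping of homotopies.
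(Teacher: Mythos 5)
Your proof is correct, and it is exactly the argument the paper has in mind: the paper states Theorem \ref{Thm1} without an explicit proof, treating it as an immediate consequence of Lemma \ref{Lem1}, Proposition \ref{Prop1}, and the definition of $I(f,\Lambda,S)$ for continuous maps, which is precisely the approximate-and-concatenate bookkeeping you carry out (including your correct observation that well-definedness is the special case $f_0=f_1$). Nothing in your write-up deviates from or adds a gap relative to the paper's intended reasoning.
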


In particular, it immediately follows from the definition that:
\begin{theorem}\label{Thm3}
If $I(f,\Lambda,S)\neq 0$, then we have $\Lambda \cap
f^{-1}(S)\neq\emptyset$.
\end{theorem}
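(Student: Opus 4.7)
The plan is to prove the contrapositive: assuming $\Lambda\cap f^{-1}(S)=\emptyset$, show that $I(f,\Lambda,S)=0$. Combined with the standing hypothesis $f(\partial\Lambda)\cap S=\emptyset$, this assumption gives $f(\bar\Lambda)\cap S=\emptyset$. Since $\bar\Lambda$ is compact and $S$ is a closed submanifold of $N$, the image $f(\bar\Lambda)$ is compact and disjoint from the closed set $S$, so there is an open neighborhood $V\subset N\setminus S$ containing $f(\bar\Lambda)$.

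Next I would invoke Lemma \ref{Lem1}, but taking advantage of the freedom built into its construction. The approximating smooth map $g\in C^{0}(\bar\Lambda,N)\cap C^{\infty}(\Lambda,N)$ and the connecting homotopy $H$ are produced by a Sard-type perturbation of $f$, so $g$ can be required to be $C^{0}$-close to $f$ and $H$ can be kept uniformly close to $f$ in the time variable. Choosing the approximation fine enough, we arrange both $g(\bar\Lambda)\subset V$ and $H(I\times\bar\Lambda)\subset V$; in particular neither meets $S$, and the boundary condition $H(I\times\partial\Lambda)\cap S=\emptyset$ required by Proposition \ref{Prop1} is automatic.

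With this $g$ in hand, $g\pitchfork_{\Lambda}S$ holds vacuously since $g^{-1}(S)\cap\Lambda=\emptyset$, so $I(g,\Lambda,S)=0$ as an empty sum of signs. By the definition of the intersection number for continuous maps (which uses the smoothed representative $g$, and is well-defined thanks to Proposition \ref{Prop1}), we conclude
$$
I(f,\Lambda,S)=I(g,\Lambda,S)=0,
$$
establishing the contrapositive. The only point that needs a little care is the $C^{0}$-smallness of the approximation supplied by Lemma \ref{Lem1}, since the statement of that lemma does not make smallness explicit. However, this is a routine consequence of its standard Sard-theoretic proof, for instance by smoothing $f$ via a partition of unity inside the trapping neighborhood $V$; thus it constitutes a minor technical verification rather than a genuine obstacle to the argument.
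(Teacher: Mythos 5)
Your contrapositive argument is correct and is essentially the paper's own reasoning: the paper states Theorem \ref{Thm3} as an immediate consequence of the definition, and your proof simply spells out that consequence (compactness of $f(\bar{\Lambda})$ disjoint from the closed set $S$, a $C^{0}$-small smooth representative from Lemma \ref{Lem1} trapped in a neighborhood $V\subset N\setminus S$, and the resulting empty sum of signs giving $I(f,\Lambda,S)=0$). You correctly identify the one point needing care, namely that the statement of Lemma \ref{Lem1} does not make the $C^{0}$-smallness of $g$ and $H$ explicit, and your remark that this is supplied by its standard Sard/partition-of-unity construction is accurate.
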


\bigskip
\section{Teichm\"{u}ller theory of packings}\label{Cp}
 Roughly speaking, a packing $\mathcal {P}$ is a configuration
of topological disks(circles) $\{D_{v}:v\in V\}$  with specified patterns of tangency. The
contact graph (or nerve) of $\mathcal{P}$ is a graph
$G_{\mathcal{P}}$, whose vertex set is $V$ and an edge appear if and
only if the corresponding disks(circles) touch.

Recall that a $K$-disk is defined as the intersection $H^+ \cap \partial K$, where $H^+$ is an affine half space which intersects the interior of the convex body $K$. Naturally, we call $\mathcal {P}=\{D_{v}:v\in V\}$ a $K$-circle packing, if all $D_{v}(v\in V)$ are $K$-disks.

In this section, we shall
investigate the  Teichm\"{u}ller theory of such packings, which characterizes $Z_K$, the space of $K$-circle packings with the same contact graph. To reach the purpose, a
main step is to generalizes the Circle Packing Theorem.

Given the convex body $K\subset \mathbb{R}^3$, without loss of generality, we assume it lies below the
plane $\{(x,y,z)\in \mathbb R^3: z=1\}$ and is tangent to the plane
at the point $N=(0,0,1)$, which is regarded as the "North Pole"
of $\partial K$. Let $h:
\partial K \rightarrow \mathbb C \cup \{\infty\}$ denote the "stereographic
projections" with $h(N)=\infty$. Since $h$ can be extended to be a
diffeomorphism between $\partial K$ and $\hat{\mathbb C}$, we endow $\partial K$ with a complex structure by pulling back the complex structure of $\hat{\mathbb C}$. Hence, up to conformal equivalence,  $\partial K$ identifies with the Riemann sphere $\hat{\mathbb C}$. Therefore, it's plausible to introduce the following notion:

\begin{definition}
A $K$-circle domain in the Riemann sphere $\hat{\mathbb C}$ ($\partial K$) is a
domain, whose complement's connected components are all closed
$K$-disks and points.
\end{definition}

Let $\Omega_{n} \subset \hat{\mathbb{C}}$ be a finitely connected
domain with $n$ boundary components. A marked domain
$\Omega_{n}(z_1,z_2,z_3)$ is the domain $\Omega_{n}$ together with
three different ordered points $z_1,z_2,z_3$ in the same boundary
component. In \cite{Schr3}, Schramm  proved the following result,
which generalizes Koebe's Uniformization \cite{H-S} of finitely connected domains:

\begin{lemma}\label{Koebe}
Let $\Omega_{n}(z_1,z_2,z_3)$ be a marked $n$-connected domain in
$\hat{\mathbb{C}}$. For any given points triple $\{p_1,p_2,p_3\},
p_i \in \partial K$, there exist a marked $K$-circle domain
$\Omega_{n}^{K}(p_1,p_2,p_3)$ and a conformal mapping
$f:\Omega_{n}(z_1,z_2,z_3)\rightarrow \Omega_{n}^{K}(p_1,p_2,p_3)$
such that $f(z_i)=p_i, \: i=1,2,3$.
\end{lemma}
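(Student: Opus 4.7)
The plan is a finite-dimensional continuity/degree argument matching a parameter space of $K$-disk configurations against the Teichm\"uller space of marked planar domains. Let $\mathcal{C}_n$ denote the smooth manifold of ordered tuples $(D_1,\dots,D_n)$ of mutually disjoint closed $K$-disks in $\partial K$ such that $p_1,p_2,p_3 \in \partial D_1$ in the prescribed cyclic order. A $K$-disk is cut out by an affine half-space, which is a $3$-parameter family (using that $\partial K$ is smooth and strictly convex), and the three marked points determine $D_1$ uniquely; hence $\dim \mathcal{C}_n = 3(n-1)$. This equals the dimension of the Teichm\"uller space $T_n$ of $n$-connected planar domains with three marked ordered points on a distinguished boundary component. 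Sending a tuple to the conformal class of $\partial K \setminus \bigsqcup_i D_i$, marked by $p_1,p_2,p_3$, defines a continuous map $\Phi : \mathcal{C}_n \to T_n$; continuity follows from continuous dependence of conformal moduli on boundary data.

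To prove surjectivity of $\Phi$ I would establish two properties. First, local injectivity, via a rigidity argument in the spirit of the classical rigidity of finite circle domains: if two $K$-circle configurations have the same conformal class and marking, a maximum-principle / extremal-length comparison forces them to coincide. Second, properness: any sequence in $\mathcal{C}_n$ along which two $K$-disks touch, or a $K$-disk collapses to a point, must send the conformal class out of every compact subset of $T_n$ (some annular modulus either collapses to $0$ or blows up). Combined with invariance of domain these two properties make $\Phi$ a proper local homeomorphism, hence a finite-sheeted covering over each component of $T_n$ that it meets.

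To pin down the degree and deduce surjectivity, I would deform $K$ through a smooth $1$-parameter family $K_t$, $t \in [0,1]$, of strictly convex smooth bodies with $K_0 = \mathbb{B}^3$, obtaining a homotopy of maps $\Phi_t : \mathcal{C}_n(K_t) \to T_n$ whose properness persists along the deformation. At $t=0$ the classical Koebe uniformization of finitely connected planar domains (cited in the excerpt) makes $\Phi_0$ a homeomorphism onto its image, of degree $\pm 1$. Homotopy invariance of topological degree then transports this conclusion to the given $K$, yielding both existence and (via the local injectivity above) uniqueness of the prescribed $K$-circle domain. The main obstacle will be the properness step, since $K$-disks can be elongated in ways that round disks cannot and do not a priori admit the standard extremal-length bounds; one must invoke the $C^2$ smoothness and strict convexity of $\partial K$ to secure uniform comparability between small $K$-disks and round disks, which is what allows classical modulus estimates to transfer.
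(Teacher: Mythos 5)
First, a point of comparison: the paper does not prove this lemma at all --- it is quoted from Schramm \cite{Schr3} --- so your proposal must be measured against Schramm's argument and against the analogous argument this paper runs one level up, for packings, in Sections 3--4. Judged that way, your architecture is essentially the right one and is close both to Schramm's continuity method and to the paper's own scheme for Theorem \ref{Thm2}: a finite-dimensional configuration space, a map to a Teichm\"uller space, a compactness (properness) statement, and a homotopy $K_t$ from the round ball along which a degree is transported (compare the paper's family $K_s$, its compact set $\Lambda$ with $f_s(\partial\Lambda)\cap Z_P=\emptyset$ from Lemma \ref{Lem2}, and the computation $I(f_K,\Lambda,Z_P)=1$); the only cosmetic difference is that your map $\Phi$ goes from configurations to conformal classes while the paper's $f_K$ goes the other way. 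Your dimension count is correct: three distinct points of a strictly convex surface are never collinear, so with the cyclic order prescribed they pin $D_1$, giving $\dim\mathcal{C}_n=3(n-1)=\dim T_n$. Moreover, since the lemma asserts existence only, the degree half of your plan already suffices: a proper homotopy with degree $\pm1$ at $t=0$ (classical Koebe uniformization) forces surjectivity, and you do not need local injectivity for the statement as posed.

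The genuine soft spot is the rigidity sub-step as you sketch it. ``Maximum-principle / extremal-length comparison in the spirit of classical circle-domain rigidity'' does not transfer: the classical proofs use anticonformal reflection in boundary circles, or extremal-length estimates that exploit the circular symmetry of the boundary components, and a general $K$-circle admits neither. The known proof of uniqueness in this setting is the He--Schramm fixed-point index argument \cite{H-S}, which is precisely what this paper adapts in its Appendix (the Rigidity Lemma \ref{Lem7}, resting on the Index Lemma \ref{Lem8} and the Poincar\'e--Hopf Lemma \ref{Lem9}); if you want the injectivity half, that is the tool to substitute. Two smaller repairs: Schramm's statement allows complementary components to be points, so degenerate boundary components of $\Omega_n$ should be split off in advance (punctures correspond to punctures under conformal maps of finitely connected domains, by removability), and the homotopy-invariance step needs properness uniform in $t$ --- the preimage of a compact set under the whole family $(t,c)\mapsto\Phi_t(c)$ must be compact --- which is exactly the analogue of the paper's Lemma \ref{Lem2}. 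You correctly identify properness, where the $C^2$ smoothness and strict convexity of $\partial K$ enter through comparability of small $K$-disks with round disks, as the real crux; with that and the index-based rigidity filled in, the plan is sound.
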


 Given a polyhedral graph $G(V,E)$, let's choose  a vertex $v_{0}\in V$ and three ordered edges $e_1,e_2,e_3 \in E$ emanating from $v_0$.
Similarly, we call the 4-tuple $\mathscr{O}=\{v_0,e_1,e_2,e_3\}$
a combinatorial frame associated to the graph $G$. Suppose
$\mathcal {P}=\{D_{v}\}$ is a packing with the contact graph $G_{\mathcal {P}}=G(V,E)$. Denoting
by $p_1,p_2,p_3$ the three tangent points corresponding to the edges
$e_1,e_2,e_3$, we call $\mathcal {P}$ a normalized packing with mark
$\{\mathscr{O},p_1,p_2,p_3\}$. Under these conventions,
turning to the limiting case of Lemma \ref{Koebe}, we have:

\begin{corollary}\label{Cor}
  Let $G(V,E)$ be a polyhedral graph associated with a combinatorial frame $\mathscr{O}$. If $\{p_1,p_2,p_3\}$ is any
three points triple in $\partial K$, then there exists
a $K$-circle packing $\mathcal {P}_{K}=\{D_v:v\in V\}$ realizing $G(V,E)$ as its contact
graph. Moreover, $\mathcal {P}_{K}$ is normalized with mark $\{\mathscr{O},p_1,p_2,p_3\}$.
\end{corollary}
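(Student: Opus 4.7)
\medskip

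The plan is to derive the corollary from Lemma \ref{Koebe} by a limiting procedure, in the spirit of Koebe's original deduction of the Circle Packing Theorem from the uniformization of finitely connected domains. Let $n = |V|$. Fix any straight-line planar embedding of $G(V,E)$ in $\hat{\mathbb{C}}$. For small $\varepsilon > 0$, let $D_v^{\varepsilon}$ be the closed round disk of radius $\varepsilon$ about the embedded position of $v$, chosen small enough that these disks are pairwise disjoint and each edge $uv \in E$ meets $\partial D_u^{\varepsilon} \cup \partial D_v^{\varepsilon}$ in exactly two transverse points. Set
\[
\Omega^{\varepsilon} \;=\; \hat{\mathbb{C}} \setminus \bigcup_{v \in V} D_v^{\varepsilon},
\]
an $n$-connected domain whose boundary components $\partial D_v^{\varepsilon}$ are in natural bijection with $V$. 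The three edges $e_1,e_2,e_3$ meet $\partial D_{v_0}^{\varepsilon}$ at points $z_1^{\varepsilon},z_2^{\varepsilon},z_3^{\varepsilon}$, giving a marking of the boundary component corresponding to $v_0$.

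\medskip

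Apply Lemma \ref{Koebe} to the marked domain $\Omega^{\varepsilon}(z_1^{\varepsilon},z_2^{\varepsilon},z_3^{\varepsilon})$ with target points $p_1,p_2,p_3 \in \partial K$. This produces a marked $K$-circle domain
\[
\Omega^{K,\varepsilon}(p_1,p_2,p_3) \;=\; \hat{\mathbb{C}} \setminus \bigcup_{v \in V} \tilde{D}_v^{\varepsilon},
\]
together with a conformal map $f^{\varepsilon}$ sending $z_i^{\varepsilon}$ to $p_i$ and $\partial D_v^{\varepsilon}$ to $\partial \tilde{D}_v^{\varepsilon}$. Because $\partial K$ is compact, passing to a subsequence ensures the closed $K$-disks $\tilde{D}_v^{\varepsilon}$ converge in the Hausdorff topology to limit sets $D_v \subset \partial K$ as $\varepsilon \to 0$; the key task is to show each $D_v$ is a genuine $K$-disk and that the resulting family $\mathcal{P}_K = \{D_v\}_{v\in V}$ realizes $G$ as its contact graph.

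\medskip

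Non-degeneration of $D_{v_0}$ is immediate since its boundary always contains the three fixed distinct points $p_1,p_2,p_3$. For the remaining disks one invokes the conformal invariance of extremal length, together with the polyhedral (in particular $3$-connected) nature of $G$: the normalization at $v_0$ pins down enough conformal modulus that no boundary component of $\Omega^{K,\varepsilon}$ can collapse to a point or escape to infinity in the limit. Thus each $D_v$ is a non-degenerate closed $K$-disk.

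\medskip

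It remains to identify the contact pattern. If $uv \in E$, then in $\Omega^{\varepsilon}$ the annular region separating $\partial D_u^{\varepsilon}$ from $\partial D_v^{\varepsilon}$ through the corridor around the edge $uv$ has extremal distance tending to $0$ as $\varepsilon \to 0$; by conformal invariance, this forces $\tilde{D}_u^{\varepsilon}$ and $\tilde{D}_v^{\varepsilon}$ to become tangent in the limit. Conversely, for $uv \notin E$ any curve family separating $\partial D_u^{\varepsilon}$ from $\partial D_v^{\varepsilon}$ must cross a boundary component of an intervening vertex, forcing the extremal distance between $D_u$ and $D_v$ to stay bounded below and hence $D_u \cap D_v = \emptyset$. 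The only obstacle worth flagging is precisely this last step — controlling the limit so that the adjacencies of $G$ match exactly the tangencies of $\mathcal{P}_K$, with no missing or spurious contacts — and it is handled by the standard length-area / extremal-length argument combined with the three-point normalization guaranteed by Lemma \ref{Koebe}. The resulting packing $\mathcal{P}_K$ is then $K$-circle packing with contact graph $G$ and is normalized with mark $\{\mathscr{O},p_1,p_2,p_3\}$, as required.
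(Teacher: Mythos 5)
There is a genuine gap, and it sits exactly at the step you flagged: the modulus computation is backwards. In your construction the disks $D_v^{\varepsilon}$ \emph{shrink} to points as $\varepsilon\to 0$, so for an edge $uv\in E$ the extremal distance between $\partial D_u^{\varepsilon}$ and $\partial D_v^{\varepsilon}$ (the extremal length of the family of curves \emph{connecting} them) grows like $\log(1/\varepsilon)$ and tends to $\infty$, not to $0$ as you assert; it is the separating family whose extremal length tends to $0$. By conformal invariance the same blow-up holds between $\partial\tilde D_u^{\varepsilon}$ and $\partial\tilde D_v^{\varepsilon}$ in the image $K$-circle domain. Since $\tilde D_{v_0}^{\varepsilon}$ is pinned (its boundary passes through $p_1,p_2,p_3$) and $\partial K$ is compact, an infinite connecting extremal distance to a non-degenerate disk forces every other $\tilde D_v^{\varepsilon}$ to collapse to a point. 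So the Hausdorff limit of your configurations is one $K$-disk plus $|V|-1$ points, with no tangencies at all; in particular your appeal to $3$-connectedness and the normalization to prevent collapse cannot work, because conformal invariance positively \emph{forces} the collapse --- the source domains $\Omega^{\varepsilon}$ converge to $\hat{\mathbb{C}}$ minus finitely many points, a conformally trivial limit.

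The strategy itself (uniformize via Lemma \ref{Koebe}, normalize, pass to a limit with extremal-length control) is the intended one --- the paper states Corollary \ref{Cor} precisely as the ``limiting case'' of Lemma \ref{Koebe} --- but the degenerating family must approach \emph{tangency}, not shrink to points. The repair: start from a circle packing $\mathcal P_0=\{D_{0,v}\}$ on the round sphere realizing $G$ (Koebe--Andreev--Thurston, which the paper already invokes and uses to fix $\mathcal P_0$ in Section 2); for small $\delta>0$ shrink each $D_{0,v}$ concentrically to $D_{0,v}^{\delta}$ so that tangencies are barely broken, and apply Lemma \ref{Koebe} to the complementary $|V|$-connected domain, with marked points $z_i^{\delta}$ on $\partial D_{0,v_0}^{\delta}$ converging to the three tangency points named by $\mathscr O$. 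Now the moduli behave correctly: for $uv\in E$ the connecting extremal distance tends to $0$, forcing the image $K$-disks (which have disjoint interiors for each $\delta$) to touch in the limit, while for $uv\notin E$ it stays bounded below, excluding spurious contacts. Non-degeneracy of the limit disks follows from the three-point normalization together with the observation used in Proposition \ref{Prop2}: three $K$-disks with disjoint interiors cannot meet at a common point, so at most two disks could degenerate, contradicting the mark. Alternatively, note the paper's proof of Theorem \ref{Deform} gives a second, self-contained route (fill the interstices with fine hexagonal packings, apply Schramm's general packing theorem, and pass to the limit by the Rodin--Sullivan argument) which avoids the marked-uniformization limit entirely.
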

If $G(V,E)$ is a triangular graph, Schramm \cite{Schr1} proved that
such a packing is unique. While $G(V,E)$ isn't a triangular graph, the
uniqueness wouldn't hold any more. In fact, there exist uncountable
normalized packings with the same contact graph. To
characterize this problem, He-Liu \cite{H-L} developed the
Teichm\"{u}ller theory of circle patterns (packings). Recalling their method, it seems little hard to consider similar results of the $K$-circle packings.

\bigskip
We would employ the notions of conformal polygons, which are considered as analogs of the conformal quadrangles. In fact, it's defined as pairs $h:I\rightarrow \hat{\mathbb{C}}$, where $I\subset\hat{\mathbb{C}}$ is a given topological polygon and $h$ is a quasiconformal embedding. For details on quasiconformal mappings,  please refer to Ahlfors' book \cite{Ahl}.

 Say two such quasiconformal embedding $h_1,h_2: I\rightarrow
\hat{\mathbb{C}}$ are Teichm\"{u}ller equivalent, if the composition
mapping $h_2\circ (h_{1})^{-1}: h_1(I)\rightarrow h_2(I)$ is
isotopic to a conformal homeomorphism $f$ such that for each
side $e_{i}\subset\partial I$, $f$ maps
$h_1(e_{i})$ onto $h_2(e_{i})$.

\begin{definition}
The Teichm\"{u}ller space of $I$, denoted by $\mathcal{T}_I$, is the
space of all equivalence classes of quasiconformal embeddings
$h:I\rightarrow \hat{\mathbb{C}}$.
\end{definition}
\begin{remark}\label{Rem}
If the polygon $I$ is  $k-$sided, it follows from the classical
Teichm\"{u}ller theory that $\mathcal{T}_I$ is diffeomorphic to the
Euclidean space $\mathbb{R}^{k-3}$. See e.g \cite{LV}.
\end{remark}

Recall that $G^*(P)=(V, E)$ is the 1-skeleton of the dual polyhedron of $P$, where $P$ has been written as  $P\equiv P(\mathcal V,\mathcal E,\mathcal F)$. Let us fix a circle packing $\mathcal{P}_0=\{D_{0,v}\}$ on the
unit sphere $\mathbb S^2(=\partial \mathbb B^3\cong\hat{\mathbb C})$ with the contact graph $G^*(P)$. For any component $I_i$ of
$\hat{\mathbb{C}}-\cup_{v\in V}D_{0,v}$, we call it an interstice. Evidently, $I_i$ is a topological polygon. Thus we could associate it with the Teichm\"{u}ller space $\mathcal{T}_{I_i}$.
Denote $\mathcal {T}_{G^*(P)}=\prod_{i=1}^{m}\mathcal {T}_{I_i}$,
where $\{I_1,I_2,\cdots, I_m\}$ are all interstices of the circle
packing $\mathcal P_{0}$. Then $m=|F|$, where $F$ is the face set of the
contact graph $G^*(P)$. Due to Remark \ref{Rem}, we easily
check that $\mathcal {T}_{G^*(P)}\cong \mathbb{R}^{2|E|-3|F|}=\mathbb{R}^{2|\mathcal E|-3|\mathcal V|}$.
Analogous to He-Liu \cite{H-L}, we now establish the following
theorem.

\begin{theorem}\label{Deform}
Let $K$, $P$, $G^*(P)$ and $\mathcal {T}_{G^*(P)}$ be as above. Suppose $p_1, p_2, p_3$ are three different points in $\partial K$.
For any
 \[
 \big[\tau\big]=([\tau_{1}],[\tau_{2}],\cdots,[\tau_{m}]) \in \mathcal {T}_{G^*(P)},
 \]
there exists a unique $K$-circle packing $\mathcal
{P}_{K}([\tau])$ with mark $\{\mathscr{O},p_1,p_2,p_3\}$
realizing $G$. Moreover, the interstice of $\mathcal
{P}_{K}([\tau])$ corresponding to $I_i$ is endowed with the given
complex structure $[\tau_{i}],\:1\leq i\leq m$.
\end{theorem}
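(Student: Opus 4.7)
The plan is to mirror the Teichm\"{u}ller theory of circle packings developed by He--Liu \cite{H-L} on the sphere, substituting Lemma \ref{Koebe} and Corollary \ref{Cor} wherever the spherical Koebe-style facts are invoked. The argument splits naturally into a quasiconformal construction for existence, a rigidity step for uniqueness, and a properness argument tying them together.

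For existence, I would begin with a quasiconformal reduction. Fix quasiconformal representatives $\tau_i : I_i \to \hat{\mathbb{C}}$ of the classes $[\tau_i]$ and assemble a Beltrami differential $\mu$ on $\hat{\mathbb{C}}$ by setting $\mu = \mu_{\tau_i}$ on each interstice $I_i$ of the reference packing $\mathcal{P}_0$ and $\mu = 0$ on the closed disks $\overline{D_{0,v}}$. The measurable Riemann mapping theorem yields a normalized quasiconformal self-map $F : \hat{\mathbb{C}} \to \hat{\mathbb{C}}$ with Beltrami coefficient $\mu$, and the image $\mathcal{P}^{(1)} := \{F(D_{0,v})\}$ is a packing of $\hat{\mathbb{C}}$ by topological disks with contact graph $G^*(P)$ whose interstices $F(I_i)$ realize the prescribed Teichm\"{u}ller classes. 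What remains is to upgrade this topological disk packing to a genuine $K$-circle packing with the mark $\{\mathscr{O}, p_1, p_2, p_3\}$ while preserving the conformal structure on interstices. I would bootstrap from Corollary \ref{Cor}, which supplies an initial $K$-circle packing $\mathcal{P}_K^0$ with the desired mark and contact graph realizing some base point $[\tau^0] \in \mathcal{T}_{G^*(P)}$, and then study the natural evaluation map $\Phi$ from the space of normalized $K$-circle packings with contact graph $G^*(P)$ to $\mathcal{T}_{G^*(P)}$ that sends a packing to the tuple of Teichm\"{u}ller classes of its interstices; the goal is to prove that $\Phi$ is a homeomorphism.

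Uniqueness amounts to injectivity of $\Phi$, which I would establish by a rigidity argument. Given two $K$-circle packings sharing mark and Teichm\"{u}ller data, the Teichm\"{u}ller equivalences on interstices together with the canonical conformal correspondences between matching disks should glue into a conformal self-homeomorphism of $\partial K \cong \hat{\mathbb{C}}$ respecting the combinatorial structure; this M\"{o}bius transformation fixes the three tangent points $p_1, p_2, p_3$ and hence is the identity, forcing the two packings to coincide. Local openness of $\Phi$ would then follow from the implicit function theorem, with the matching dimension count $\dim \mathcal{T}_{G^*(P)} = 2|\mathcal{E}| - 3|\mathcal{V}|$ ensuring $\Phi$ is in fact a local diffeomorphism.

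I expect the hardest step to be properness of $\Phi$, needed to upgrade the local homeomorphism to a global one. One must rule out that a sequence of normalized $K$-circle packings whose Teichm\"{u}ller parameters stay in a compact subset of $\mathcal{T}_{G^*(P)}$ degenerates, for instance by a disk collapsing to a point or expanding to fill $\partial K$. Controlling such limits requires uniform quasiconformal estimates on the Step~1 mappings together with strict convexity and smoothness of $\partial K$ and the constraint imposed by the fixed mark, in the spirit of Schramm's compactness analysis in \cite{Schr2}. Once properness is in hand, the simple connectivity of $\mathcal{T}_{G^*(P)} \cong \mathbb{R}^{2|\mathcal{E}| - 3|\mathcal{V}|}$ combined with local injectivity lets standard covering-space arguments conclude that $\Phi$ is a global homeomorphism, which is precisely the assertion of the theorem.
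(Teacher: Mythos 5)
Your global architecture (an evaluation map $\Phi$ from normalized $K$-circle packings to $\mathcal{T}_{G^*(P)}$, shown to be a homeomorphism via local injectivity plus properness over the cell $\mathbb{R}^{2|\mathcal E|-3|\mathcal V|}$) is a legitimate continuity-method skeleton in the spirit of He--Liu, and your Beltrami-coefficient construction correctly produces a topological disk packing with the prescribed interstice structures. But the injectivity step, which is the heart of the theorem, has a genuine gap: you claim that the Teichm\"{u}ller equivalences on interstices ``together with the canonical conformal correspondences between matching disks should glue into a conformal self-homeomorphism of $\partial K\cong\hat{\mathbb{C}}$.'' No such canonical correspondence exists, and no gluing of this kind is possible in general: a conformal map between two topological disks has only a three-real-parameter family of choices, so it cannot match the prescribed boundary values inherited from the interstice maps --- prescribing the full boundary correspondence overdetermines it. The existence of a conformal extension across the $K$-circles is essentially \emph{equivalent} to the rigidity you are trying to prove, so the argument is circular. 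This is precisely why the paper does not glue: its uniqueness step invokes the Rigidity Lemma (Lemma \ref{Lem7}), proved in the Appendix by the He--Schramm fixed point index method --- one compares the two packings directly via a map $f$ between interstice unions, uses the Index Lemma (Lemma \ref{Lem8}) and its $K$-circle variant (Proposition \ref{Prop4}) together with the Poincar\'{e}--Hopf count (Lemma \ref{Lem9}), and combines perturbations $f_{a,b}=af+b$ with Koebe uniformization and repeated Schwarz reflections to force the marked disks, and then all disks, to coincide. None of this machinery appears in your sketch, and your one-sentence replacement for it would fail.

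On existence the two routes genuinely differ, and yours is workable in outline: the paper bypasses the measurable Riemann mapping theorem entirely, instead filling each interstice $[\tau_i](I_i)$ with a hexagonal circle packing of mesh $1/n$ (Rodin--Sullivan cookie-cutter), gluing the resulting nerves to $G^*(P)$ to get triangulated graphs $G_n$, applying Schramm's general Packing Theorem to realize each $G_n$, and extracting a convergent subsequence whose limit is non-degenerate by Proposition \ref{Prop2} (three $K$-disks with disjoint interiors cannot share a point, and the mark pins down the remaining M\"{o}bius freedom); Rodin--Sullivan convergence then identifies the limiting interstice structures with $[\tau]$. The paper also notes existence follows directly as a limiting case of Lemma \ref{Koebe}. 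Your properness discussion is consistent with the paper's degeneration analysis (the same Proposition \ref{Prop2} mechanism plus the mark), but your appeal to the implicit function theorem for local openness of $\Phi$ would itself need Schramm-type transversality input that you do not supply. In short: the continuity-method frame could be made to work, but as written the uniqueness step is a named gap, not a proof.
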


\begin{proof}
We will prove the theorem by two steps.

\textbf{Existence.}
In fact, as a limiting case of Lemma \ref{Koebe}, it's straightforward. However, we would take another approach, which seems to provide more intuitive understanding into this consequence.
The ideal is similar to the method used by He-Liu in \cite{H-L} or
Schramm in \cite{Schr4}. It is a combination of the general Packing Theorem
\cite{Schr1,Schr3} and Rodin-Sullivan method \cite{R-S}.

For each $I_i \in I=\{I_1,I_2,\cdots, I_n\}$, and for any given complex
structure $[\tau_i]:I_i \rightarrow\hat{\mathbb{C}}$, without loss of
generality, we may assume that the image region $[\tau_i](I_i)$ is a
bounded domain in the complex plane $\mathbb{C}$.

Lay down a regular hexagonal packing of circles in $\mathbb{C}$, say each of radius
$1/n$. By using the boundary component $\partial[\tau_i](I_i)$ like a
cookie-cutter, we obtain a circle packing $\mathcal {Q}_{I_i,n}$ which
consists of all the circles intersecting the region $[\tau_i](I_i)$.
Denote by $\mathcal {G}_{I_i,n}$ the contact graph of $\mathcal {Q}_{I_i,n}$.

Joining the contact graphs $\{\mathcal {G}_{I_i, n}\}_{1\leq i\leq
m}$ to the original contact graph $G^*(P)$ along the corresponding
edges, we obtain a triangular graph $G_n$. For every newly obtained vertex of $G_n$, we associate it with the standard disk foliation. According to the general Packing Theorem \cite{Schr2,Schr4}, there exists a normalized  packing $\mathcal P_{n}$
realizing the contact graph $G_{n}$. Discarding the standard disks corresponding to those new vertexes, we acquire a packing realizing $G^*(P)$. For ease of notations, we still denote it by $\mathcal P_{n}$.

For packing sequence $\mathcal P_{n}$, it's easy to see that there
exists subsequence $\mathcal P_{n_k}$ such that $\mathcal P_{n_k}$ convergent to a
pre-packing $\mathcal P_{\infty}$. Because of the following Proposition
\ref{Prop2}, this pre-packing isn't degenerate. That means $\mathcal P_{\infty}$ is
actually a "real" $K$-circle packing with contact graph $G^*(P)$.

Let $\mathcal {P}_{K}([\tau])=\mathcal {P}_{\infty}$. Using Rodin-Sullivan's method \cite{R-S}, it's not hard to verify
that the interstices of $\mathcal {P}_{K}([\tau])$ are endowed with the
given complex structure.

\bigskip
\textbf{Uniqueness}. Suppose that there're
two normalized packings $\mathcal{P}_{K}=\{D_{v}\}$ and
$\mathcal{P'}_{K}=\{D'_{v}\}$ satisfying the same conditions.
Namely, there exists conformal map $f:I\rightarrow I'$ between the
corresponding interstices of $\mathcal{P}_{K}$ and
$\mathcal{P'}_{K}$, we shall prove the uniqueness by the Rigidity Lemma (Lemma \ref{Lem7}) in Appendix part.
It completes the proof.
\end{proof}

\begin{proposition}\label{Prop2}
The packing $\mathcal {P}_{\infty}=\{D_{\infty,v}:v\in V\}$ isn't degenerating.
\end{proposition}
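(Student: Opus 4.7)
The aim is to show that in the limit pre-packing every $D_{\infty,v}$ is a genuine $K$-disk, so that $\mathcal{P}_\infty$ really realizes $G^*(P)$ as a $K$-circle packing. Since a $K$-disk is the intersection of an affine half-space with $\partial K$, degeneration can occur in two dual ways: a disk can shrink to a point, or it can swell to cover $\partial K$ minus a point. The plan is to anchor the packing at the vertex $v_0$ using the normalization, then propagate non-degeneracy across $G^*(P)$ via a Ring Lemma, and finally deduce the absence of the dual (swelling) degeneration.

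By construction each $\mathcal{P}_n$ carries the mark $\{\mathscr{O},p_1,p_2,p_3\}$, so the disk $D_{n,v_0}$ is tangent to three of its neighbors at the fixed distinct points $p_1,p_2,p_3\in\partial K$ for every $n$. Taking the Hausdorff limit, $\partial D_{\infty,v_0}$ still passes through these three distinct points, hence is a non-degenerate $K$-circle. In particular $D_{\infty,v_0}$ neither collapses to a point nor swells to all of $\partial K$.

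To extend non-degeneracy to every vertex I would establish a Ring Lemma for $K$-disks in the style of Rodin--Sullivan: if a $K$-disk $D$ is surrounded by a chain of $k$ mutually tangent $K$-disks $D^{(1)},\ldots,D^{(k)}$, each also tangent to $D$, then the ratio of the diameters of $D$ and any $D^{(i)}$ is bounded above and below by a constant depending only on $k$ and on $K$. The idea is to push each local configuration through the diffeomorphism $h:\partial K\to\hat{\mathbb{C}}$ of Section~\ref{Cp}, invoke the classical round-circle lemma, and track the bounded distortion of $h$ on the compact subsets where the configuration lives. Because $G^*(P)$ is finite, connected, and of bounded valence, iterating this estimate edge by edge starting from $v_0$ yields a uniform positive lower bound on $\diam(D_{\infty,v})$ for every $v\in V$, ruling out the collapsing degeneration. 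The swelling degeneration is ruled out dually: if some $D_{\infty,u}$ covered $\partial K$ except a single point $q$, then every neighbor of $u$ in $G^*(P)$, being disjoint from the interior of $D_{\infty,u}$, would be forced into $\{q\}$ and hence would collapse, contradicting the uniform lower diameter bound just obtained.

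The main obstacle I anticipate is proving the $K$-disk Ring Lemma uniformly across scales. Small $K$-disks behave essentially like Euclidean round disks, but the packing may contain large $K$-disks, where $\partial K$ deviates noticeably from any tangent plane and one cannot simply appeal to the Euclidean lemma at a single scale. A careful comparison via $h$, combined with a compactness/contradiction argument on the space of triples of mutually tangent $K$-disks (where strict convexity and smoothness of $\partial K$ prevent flat limits), should deliver the uniform constant needed to close the argument.
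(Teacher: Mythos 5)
Your anchor step---extracting non-degeneracy of $D_{\infty,v_0}$ from the fact that $\partial D_{n,v_0}$ passes through the three fixed distinct points $p_1,p_2,p_3$ for every $n$---is correct, and it is essentially how the normalization enters the paper's argument too. The genuine gap is in your propagation step: the Ring Lemma is inapplicable in this setting because $G^{\ast}(P)$ is a general polyhedral graph, not a triangulation. In a packing realizing $G^{\ast}(P)$, the neighbors of a disk $D_v$ do \emph{not} form a closed chain of mutually tangent disks; consecutive neighbors touch only across triangular interstices, and here the interstices $I_i$ are polygons with arbitrarily many sides. Without the surrounding chain, mere tangency gives no two-sided size comparison: a $K$-disk tangent to a non-degenerate disk can be arbitrarily small (a tiny disk sitting in a large interstice next to a huge one), which is precisely why $\mathcal{T}_{G^{\ast}(P)}$ has positive dimension. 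So ``iterating the estimate edge by edge starting from $v_0$'' breaks at the first non-triangular face. Nor can you retreat to the triangulation $G_n$ of the existence proof: there the ring around each vertex of $G^{\ast}(P)$ consists of hexagonal disks of radius $1/n$, and its length $k_n$ grows with $n$, so the Ring Lemma constant $c(k_n)$ tends to $0$ and no uniform bound survives the limit $n\to\infty$ (this is the same obstruction that forces Rodin--Sullivan to supplement the Ring Lemma with length-area arguments). Your closing remark anticipates an analytic difficulty with large $K$-disks, but the fatal problem is combinatorial, not metric.

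The paper's proof sidesteps all quantitative estimates with a soft contradiction argument. Suppose some disk tends to a point $p$. Since any three $K$-disks with disjoint interiors cannot meet at a common point, at most two of the disks whose limits contain $p$ can remain non-degenerate, and the degeneracy therefore propagates through the packing: all $K$-disks of $\mathcal{P}_{n_k}$, except at most two, collapse to points. This contradicts the normalization, which keeps the tangent points $p_1,p_2,p_3$ (hence the disks $D_{v_0},D_{v_1},D_{v_2},D_{v_3}$ of your anchor step) apart. Note also that your ``swelling'' degeneration is subsumed by this case: if some disk exhausted $\partial K$ minus a point, all remaining disks would be confined to a shrinking complement and would collapse, so no separate dual argument is needed. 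What your proposal is missing, in short, is a propagation mechanism valid for non-triangular contact graphs; the disjoint-interiors/common-point observation supplies exactly that, where the Ring Lemma cannot.
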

\begin{proof}
Suppose it's not true. Then there exists at least one disks tends to a point. Note that any three $K$-disks
with disjoint interiors can not meet at a common point. Therefore,
 all $K$-disks in the
packing $\mathcal P_{n_k}$, except for at most two, will degenerate to points, which contradicts to our
normalization conditions.
\end{proof}

\section{Proof of the main theorems}\label{Aap}
The real $3$-dimensional projective space $\mathbb{RP}^3$ is the
space of all lines through $0$ in $\mathbb R^4$. More precisely, we
can define $x\thicksim x'$ in $\mathbb R^4\backslash \{0\}$ if and
only if there is a real number $\lambda\neq 0$ such that $x'=\lambda
x$. Let
$$
\Pi': \mathbb R^4\backslash \{0\}\rightarrow \mathbb{RP}^3
$$
be the projection. Then we denote the point $\Pi'((x_0,
x_1,x_2,x_3))$ by $[x_0, x_1,x_2,x_3]$, which is the homogeneous
coordinates in $\mathbb{RP}^3$. We also have the projection
$$
\Pi: \mathbb S^3 \rightarrow \mathbb{RP}^3,
$$
where $\mathbb S^3$ is the unit sphere in the space $\mathbb
R^4\backslash \{0\}$. Then $\mathbb{RP}^3$ can be regarded as the
quotient of $\mathbb S^3$ obtained by identifying antipodal points.

Furthermore, we can regard $\mathbb R^3$ as a subset of
$\mathbb{RP}^3$ by identify the point $[1, x_1,x_2,x_3] \in
\mathbb{RP}^3$ with $( x_1,x_2,x_3) \in \mathbb R^3$. Namely,
$\mathbb{RP}^3=\mathbb R^3 \cup \mathbb{RP}^2$.

Each plane $f \subset \mathbb{RP}^3$ can be defined as
$$
f=\{(x,y,z,w): A x+B y+C z+D w=0\}.
$$
Therefore, each plane $f \subset \mathbb{RP}^3$ is uniquely
determined by the point $[A,B,C,D] \in \mathbb{RP}^3$.


\bigskip
 Recall that $P(\mathcal V,\mathcal E,\mathcal F)\subset \mathbb{R}^{3}$ is a given polyhedron. Let $Z$ denote the space
$(\mathbb{RP}^3)^{|\mathcal F|}$. Namely, a point $z\in Z$ gives a
choice of a half space(or an oriented plane) for each $\mbox{\small
$f$}\in \mathcal F$. $Z$ will be called the configuration space, and
a point $z\in Z$ will be called a configuration. For a configuration
$z\in Z$, we denote by $z_f$ the oriented plane corresponding to the
face $\mbox{\small $f$}\in \mathcal F$.

For any $v\in \mathcal V$, let $lk(v)$ be the number of faces
linking to this vertex $v$. Denote by $\{f_1,f_2,\cdots,
f_{lk(v)}\}$ all faces of $P$ link to the vertex $v$. Let
$Z_{vo}\subset Z$ be the set of configurations $z$ such that, for at
least one triple $\{i_1,i_2,i_3\} \subset \{1,2,\cdots, lk(v)\}$,
the intersection
$$
z_{f_{i_1}}\cap z_{f_{i_2}} \cap z_{f_{i_3}}
$$
contains more than one points. Evidently, $Z_{vo}\subset Z$ is
closed which implies that
\[
Z_{oc}=Z\setminus(\cup_{v\in \mathcal V}Z_{vo})
\]
is open in $Z$. Namely, it's a manifold with the same dimension as
$Z$. More precisely,
\[
dim Z_{oc}=3|\mathcal F|.
\]
On the other hand, let $Z_{vc}\subset Z_{oc}$ be the set of
configurations $z$ such that  $ \cap_{i=1}^{lk(v)}{z_{f_i}}\neq
\emptyset$, where $\mbox{\small $f_1,f_2,\cdots,f_{lk(v)}$}$ are all
faces of $P$ linking to the vertex $v$. We then define
$Z_P=\cap_{v\in \mathcal V}Z_{vc}$. Obviously, a configuration of
$Z_P$ corresponds to a polyhedron in $\mathbb{RP}^3$ combinatorially
equivalent to $P$. We have:

\begin{lemma}\label{Lem3}
$Z_P$ is a closed submanifold of $Z_{oc}$ with real dimension $dim
Z_{P}=|\mathcal E|+6$.
\end{lemma}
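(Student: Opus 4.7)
The plan is to exhibit $Z_P$ as the transverse common zero locus of $|\mathcal V|$ smooth maps, one per vertex of $P$, and to read off the claimed dimension from Euler's formula.

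First, for each vertex $v\in\mathcal V$ with incident faces $f_1^v,\dots,f_{lk(v)}^v$, I would introduce the apex map
\[
p_v:Z_{oc}\to\mathbb{RP}^3,\qquad p_v(z):=z_{f_1^v}\cap z_{f_2^v}\cap z_{f_3^v}.
\]
This is well-defined and smooth on $Z_{oc}$ by the very definition of that space. I would then encode $Z_{vc}$ as the zero locus of a smooth map
\[
\Psi_v:Z_{oc}\to\mathbb R^{lk(v)-3},
\]
whose $i$-th component, in a fixed affine chart around $p_v(z^0)$, is the signed distance from the plane $z_{f_{3+i}^v}$ to the point $p_v(z)$; clearly $Z_{vc}=\Psi_v^{-1}(0)$. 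A small tilt of the single plane $z_{f_{3+i}^v}$ about an axis through $p_v(z)$ and away from the plane changes only the $i$-th component of $\Psi_v$ to first order, so $\Psi_v$ is a submersion at every $z\in Z_{vc}$. Hence each $Z_{vc}$ is a smooth closed submanifold of $Z_{oc}$ of codimension $lk(v)-3$.

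The main obstacle is to verify that $Z_P=\bigcap_{v}Z_{vc}$ is a \emph{transverse} intersection: perturbing a single face plane $z_f$ simultaneously moves $\Psi_v$ at every vertex $v$ of $f$, so the per-vertex submersions do not assemble tautologically into a global one. To handle this I would pass to a vertex-based parametrization. On a neighborhood $U$ of a point $z^0\in Z_P$, the apex map
\[
\Phi:U\cap Z_{oc}\to(\mathbb{RP}^3)^{|\mathcal V|},\qquad z\mapsto\bigl(p_v(z)\bigr)_{v\in\mathcal V},
\]
admits a smooth local one-sided inverse $\Theta$ sending $(p_v)_v$ to the configuration whose face plane $z_f$ is the unique plane through three chosen non-collinear vertices of $f$. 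Under $\Theta$, $Z_P$ near $z^0$ corresponds exactly to the \emph{coplanarity locus} $L\subset(\mathbb{RP}^3)^{|\mathcal V|}$, cut out by requiring that, for every face $f$ with $k_f:=|f|$ vertices, the remaining $k_f-3$ vertices of $f$ lie on the plane spanned by the three chosen ones.

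At every point of $L$ I would check these coplanarity conditions intersect transversally by ordering the (face, extra-vertex) pairs so that each successive "extra" vertex can be perturbed along the normal to its face plane without disturbing the conditions already imposed; this yields a block lower-triangular Jacobian with non-vanishing diagonal. Consequently $L$ has codimension $\sum_f(k_f-3)=2|\mathcal E|-3|\mathcal F|$ in $(\mathbb{RP}^3)^{|\mathcal V|}$, so
\[
\dim L=3|\mathcal V|-2|\mathcal E|+3|\mathcal F|=|\mathcal E|+6
\]
by Euler's formula $|\mathcal V|-|\mathcal E|+|\mathcal F|=2$. Since $\Phi$ is a local diffeomorphism between $Z_P$ and $L$ near $z^0$, while $Z_P=\bigcap_v Z_{vc}$ is a closed subset of $Z_{oc}$, it follows that $Z_P$ is a closed smooth submanifold of $Z_{oc}$ of dimension $|\mathcal E|+6$, as asserted.
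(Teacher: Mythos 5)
Your global bookkeeping coincides with the paper's own proof: the paper also cuts $Z_P$ out of $Z_{oc}$ by per-vertex incidence conditions --- written there as the vanishing of the $4\times 4$ determinants $R(f_{i_1},f_{i_2},f_{i_3},f_{i_4})$ --- invokes the regular value theorem, and obtains
$\dim Z_P=3|\mathcal F|-\sum_{v\in\mathcal V}(lk(v)-3)=|\mathcal E|+6$
by exactly your Euler-formula computation. Your passage to vertex coordinates via $\Phi$ and $\Theta$ is a legitimate dual reformulation (one should restrict the claim about $L$ to a neighborhood of $\Phi(z^0)$ rather than ``every point of $L$'', since $L$ also contains degenerate tuples with collinear chosen triples, but locally the correspondence $Z_P\leftrightarrow L$ works as you describe, and the embedding argument via the smooth one-sided inverse is sound).

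The genuine gap is the transversality of the coplanarity conditions --- precisely the point that the paper compresses into the one-line assertion that $0$ is a regular value of $R$ ``from the definition of $Z_{oc}$'' --- and your triangularization does not establish it. Every vertex of a face is either one of the three chosen vertices or an extra vertex of that face, so a vertex $q$ enters the conditions of \emph{every} non-triangular face containing it, and $q$ lies on at least three faces. Perturbing $q$ along the normal of its face $f$ therefore disturbs the conditions of the other faces through $q$ unless the direction is tangent to their planes; but at a point of $Z_P\cap Z_{oc}$ all faces through $q$ pass through a common point and any three of their normals are linearly independent (this is what membership in $Z_{oc}$ gives), so a nonzero direction tangent to three of these planes does not exist. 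Consequently a single vertex can serve as the perturbed vertex for at most two previously imposed conditions, and whether the chosen triples, the assignment of extra vertices, and the processing order can always be arranged so that the disturbance relation is acyclic is a nontrivial combinatorial claim about $P$ that you assert but never prove; as literally stated (normal direction, arbitrary order) the Jacobian is simply not block lower-triangular --- already for a combinatorial cube with one extra vertex per face, moving an extra vertex along its face normal generically destroys the coplanarity of the two other faces through it. (A small slip of the same kind occurs earlier: a tilt of $z_{f_{3+i}^v}$ about an axis through $p_v(z)$ fixes the incidence with $p_v(z)$ to first order when $p_v(z)$ lies on the plane, so it does not show $\Psi_v$ is a submersion on $Z_{vc}$; a translation of the plane along its normal does.) So your proposal reproduces the paper's skeleton, reduction, and dimension count, but the crux --- the regular-value verification --- remains open in your write-up, just as it is merely asserted in the paper's; to close it you would need either a proof of the ordering claim or a direct argument that the differentials of the $\sum_f(k_f-3)$ conditions are linearly independent using the $Z_{oc}$ nondegeneracy.
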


\begin{proof}
 Denote by
$f_1,f_2,\cdots,f_{lk(v)}$ the oriented faces of the polyhedron $P$
link to $v$. For every $i=1,2,\cdots, lk(v)$, suppose that the plane
$z_{f_{i}}$ is defined by the equations
\[
z_{f_{i}}=\{[x,y,z,w]: A_{i}x+B_{i}y+C_{i}z+D_{i}w=0\}.
\]

Consider the matrix
$$
\begin{pmatrix}
  A_1&B_1&C_1&D_1\\
  A_2&B_2&C_2&D_2\\
  A_3&B_3&C_3&D_4\\
  \vdots&\vdots&\vdots&\vdots\\
  A_{lk(v)}&B_{lk(v)}&C_{lk(v)}&D_{lk(v)}
\end{pmatrix}
$$
Then $\cap_{i=1}^{lk(v)} z_{f_i}\neq \emptyset$ if and only if the
rank of the above matrix is less than $4$. Equivalently, the
determinant
 $$
  R(f_{i_1},f_{i_2},f_{i_3},f_{i_4})=
  \left|\begin{array}{cccc}
  A_{i_1}&B_{i_1}&C_{i_1}&D_{i_1}\\
  A_{i_2}&B_{i_2}&C_{i_2}&D_{i_2}\\
  A_{i_3}&B_{i_3}&C_{i_3}&D_{i_3}\\
  A_{i_4}&B_{i_4}&C_{i_4}&D_{i_4}
  \end{array}\right|=0.
  $$
for each subset $\{i_1,i_2,i_3,i_4\} \subset \{1,2,\cdots, lk(v)\}$.
From the definition of $Z_{oc}$, $0$ is a regular value of the
smooth function $R$. It follows from the regular value theorem that
$Z_P$ is a closed submanifold of $Z_{oc}$. Please refer to \cite{Hir}.

Then we have:
$$
dim Z_{P}=3|\mathcal F|-(
   \sum\nolimits_{v\in \mathcal V}
  lk(v)-3)=3|\mathcal F|-(2|\mathcal E|-3|\mathcal V|)=|\mathcal E|+6,
$$
where the last identity comes from Euler's formula.
\end{proof}

\bigskip
 Let $G^{\ast}(P)=(V,E,F)$ be as above. We choose a combinatorial frame $\mathscr{O}$ for
$G^{\ast}(P)$ and three different points $p_1,p_2,p_3$ in $\partial
K$. For each $[\tau]\in \mathcal {T}_{G^{\ast}(P)}$, from Theorem
\ref{Deform}, it follows that there is a unique normalized
$K$-circle packing $\mathcal{P}_{K}([\tau])$ with mark
$\{\mathscr{O},p_1,p_2,p_3\}$, which realizes the graph
$G^{\ast}(P)$. Consequently, it gives rise to the following
mapping:
\[
f_{K}:\mathcal {T}_{G^{\ast}(P)}\longrightarrow Z_{oc}\hookrightarrow Z.
\]

In addition, a simple computation shows that:
$$
\begin{aligned}
&dim Z_{P}=3|\mathcal F|-(2|\mathcal E|-3|\mathcal V|)=|\mathcal E|+6,\\
&dim \mathcal {T}_{G^{\ast}(P)}=2|\mathcal E|-3|\mathcal V|\\
&dim \mathcal {T}_{G^{\ast}(P)}+dim Z_{P}=3|\mathcal F|=dim Z_{oc}.\\
\end{aligned}
$$

These identities remind us of the intersection number theory.
Whereas, in order to apply this tool, it's necessary to find a
compact set $\Lambda\subset \mathcal{T}_{G^{\ast}(P)}$, and determine the
intersection number $I(f_{K},\Lambda,Z_{P})$. Indeed, the following
lemma guarantees the existence of such a subset.

\begin{lemma}\label{Lem2}
For any given strictly convex body $K$, there exists a compact set
$\Lambda \subset \mathcal {T}_{G^{\ast}(P)}$ such that
$f(\partial \Lambda)\cap Z_P=\emptyset$.
\end{lemma}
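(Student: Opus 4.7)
\medskip
\noindent\textbf{Proof proposal.} The plan is to show that $f_K^{-1}(Z_P)$ has compact closure in $\mathcal{T}_{G^{\ast}(P)}$; then any compact set $\Lambda$ containing this closure in its interior --- for instance a sufficiently large closed ball under the identification $\mathcal{T}_{G^{\ast}(P)}\cong\mathbb{R}^{2|\mathcal{E}|-3|\mathcal{V}|}$ of Remark \ref{Rem} --- automatically satisfies $f_K(\partial\Lambda)\cap Z_P=\emptyset$, as required.

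To establish this boundedness, I would argue by contradiction. Suppose there is a sequence $[\tau^{(n)}]\in f_K^{-1}(Z_P)$ leaving every compact subset of $\mathcal{T}_{G^{\ast}(P)}$. Each configuration $f_K([\tau^{(n)}])$ lies in the compact space $Z=(\mathbb{RP}^{3})^{|\mathcal{F}|}$, so after passing to a subsequence the supporting planes of the $K$-disks of $\mathcal{P}_{K}([\tau^{(n)}])$ converge in $\mathbb{RP}^{3}$. Hence the packings themselves converge to a limit pre-packing $\mathcal{P}_{K}^{(\infty)}=\{D_{v}^{(\infty)}\}_{v\in V}$, where each $D_{v}^{(\infty)}$ is either a $K$-disk or a point of $\partial K$. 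The goal is to force $\mathcal{P}_{K}^{(\infty)}$ to be a genuine (non-degenerate) $K$-circle packing realizing $G^{\ast}(P)$: the uniqueness part of Theorem \ref{Deform} will then supply a limit $[\tau^{(\infty)}]\in\mathcal{T}_{G^{\ast}(P)}$ with $[\tau^{(n)}]\to[\tau^{(\infty)}]$, contradicting the assumed escape to infinity.

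The crux is the non-degeneracy of $\mathcal{P}_{K}^{(\infty)}$, and here the normalization is decisive. The disk $D_{v_{0}}$ at the distinguished vertex of the combinatorial frame $\mathscr{O}$ has three \emph{distinct} prescribed tangencies $p_{1},p_{2},p_{3}$ on its boundary, so $D_{v_{0}}^{(\infty)}$ cannot collapse to a point. Replaying the argument of Proposition \ref{Prop2} --- three $K$-disks with disjoint interiors cannot share a common boundary point, and the polyhedral graph $G^{\ast}(P)$ has minimum vertex degree at least three --- any collapsed limit disk forces all but at most two of its neighbors to collapse onto the same point of $\partial K$. Propagating this through the connected graph $G^{\ast}(P)$ yields a collapse of all but at most two disks to at most two points of $\partial K$, which is incompatible with the three distinct marked tangencies preserved on $D_{v_{0}}^{(\infty)}$. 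Hence no disk can degenerate.

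The main obstacle I anticipate is precisely this combinatorial propagation step: although the normalization pins down only three tangencies while $G^{\ast}(P)$ may have arbitrarily many vertices, one must verify that the connectivity of $G^{\ast}(P)$ together with the disjointness of $p_{1},p_{2},p_{3}$ rigidly rules out every possible collapse pattern. Once this is handled, the Teichm\"uller structures on the interstices vary continuously with the packing, so $[\tau^{(n)}]\to[\tau^{(\infty)}]$ follows, closing the contradiction and hence establishing the lemma.
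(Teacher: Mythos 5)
There is a genuine gap: your argument rules out only one of the two ways a sequence $[\tau^{(n)}]$ can escape to infinity in $\mathcal{T}_{G^{\ast}(P)}$. A point of $\mathcal{T}_{G^{\ast}(P)}$ records the conformal moduli of the interstices, so the sequence can diverge even when every disk of $\mathcal{P}_K([\tau^{(n)}])$ stays uniformly non-degenerate: it suffices that the modulus of some interstice degenerates, i.e.\ that two non-adjacent boundary arcs of an interstice $I_{f,n}$ approach each other. In the limit this produces new tangencies, so the limit pre-packing $\mathcal{P}_K^{(\infty)}$ --- although consisting entirely of genuine $K$-disks --- realizes a contact graph strictly larger than $G^{\ast}(P)$, and no $[\tau^{(\infty)}]\in\mathcal{T}_{G^{\ast}(P)}$ maps to it. Your step ``non-degeneracy of all disks, plus uniqueness in Theorem \ref{Deform}, yields $[\tau^{(n)}]\to[\tau^{(\infty)}]$'' is therefore false as stated: non-collapse of disks does not imply convergence in Teichm\"uller space. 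A symptom of the problem is that, after the opening sentence, your argument never actually uses the hypothesis $[\tau^{(n)}]\in f_K^{-1}(Z_P)$; were the reasoning sound for arbitrary sequences, it would show that $\mathcal{T}_{G^{\ast}(P)}$ itself is compact, contradicting $\mathcal{T}_{G^{\ast}(P)}\cong\mathbb{R}^{2|\mathcal{E}|-3|\mathcal{V}|}$.

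The paper's proof of Lemma \ref{Lem2} splits the divergence into exactly these two cases --- collapse of a disk, or pinching of two non-adjacent arcs of an interstice --- and disposes of the first as you do, via the normalization argument of Proposition \ref{Prop2}. The second case is where membership in $Z_P$ is essential: since $f_K([\tau]_n)\in Z_P$, each configuration corresponds to a $K$-midscribable polyhedron $P_n$ whose tangent edge lines separate the non-adjacent arcs of each interstice; combined with the positive lower bound on disk sizes obtained from the first case, this keeps the arc distances uniformly bounded away from zero and forbids the pinching. To repair your proposal you must add this second case and, in it, exploit the geometry of the polyhedra $P_n$ (or some equivalent use of the $Z_P$ condition); ruling out disk collapse alone cannot suffice.
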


\begin{proof}
We assume, by contradiction, that there is not such a compact set
$\Lambda$. Then there is a sequence of $[\tau]_{n}\in
f_{K}^{-1}(Z_P)$ such that the corresponding normalized packings
$\mathcal {P}_{n}$ satisfy one of the follow two possibilities:

\begin{itemize}
\item{}As $n\rightarrow \infty$, there exists $v\in V$, such that the corresponding circles
$\{{D}_{ n}(v)\}$ in the packings  $\mathcal{P}_{K}([\tau]_n)$ tends to a
point;
\item{} For certain $f\in F$, as $n\rightarrow \infty$, the distance of two non-adjacent arcs of the interstice $I_{f,n}$
of the packings $\mathcal{P}_{K}([\tau]_n)$  tends to zero.
\end{itemize}

Using a similar argument as in Proposition
\ref{Prop2}, we  rule out the first possibility.

In the second case, for given $n$, since $[\tau]_{n}\in f_{K}^{-1}(Z_P)$, this corresponds to a $K$-midscribable polyhedron $P_{n}$. Hence the tangent edges of $P_{n}$
will separate the non-adjacent arcs. On the other hand, we have known that the sizes of all disks in
$\mathcal{P}_{K}([\tau]_{n})$ have positive infimum. These facts
together tell us that the distance of such non-adjacent arcs can't
tend to zero, which prove the statements.
\end{proof}

If we could prove $I(f_{K},\Lambda,Z_{P})\neq 0$, then Theorem
\ref{Thm3} implies that $f^{-1}_{K}(Z_P)\cap \Lambda \neq
\emptyset$, which leads to the existence part of Theorem \ref{main}.
To obtain the desired result, we need the following transversality
theorem, which is a tinily modified version of Schramm's result in
\cite{Schr2}.

\begin{lemma}\label{Lem6}\textbf{(Transversality theorem)}
Given any strictly convex body $K\subset \mathbb{R}^{3}$ with smooth
bounary, then we have $f_{K}\mbox{\Large$\pitchfork$}Z_{P}$.
\end{lemma}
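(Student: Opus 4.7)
The plan is to verify the infinitesimal transversality
\[
df_K\bigl(T_{[\tau]}\mathcal{T}_{G^{\ast}(P)}\bigr)+T_{z}Z_{P}=T_{z}Z_{oc},\qquad z=f_K([\tau]),
\]
at every $[\tau]\in f_K^{-1}(Z_P)$. Since the dimension identities recorded just before the statement of the lemma give $\dim \mathcal{T}_{G^{\ast}(P)}+\dim Z_{P}=\dim Z_{oc}$, this reduces to showing that the intersection $df_K(T_{[\tau]}\mathcal{T}_{G^{\ast}(P)})\cap T_{z}Z_{P}$ is trivial; equivalently, any infinitesimal Teichm\"uller deformation whose induced variation of face planes remains tangent to $Z_P$ must itself be zero.

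First I would model the two sides concretely. A tangent vector $\dot{[\tau]}\in T_{[\tau]}\mathcal{T}_{G^{\ast}(P)}$ is an infinitesimal Beltrami-type variation on each interstice; by Theorem~\ref{Deform} it determines a first-order deformation of $\mathcal{P}_{K}([\tau])$ fixing the marked tangent points $p_1,p_2,p_3$, which transports via the smooth correspondence ``supporting plane $\leftrightarrow$ $K$-disk'' (provided by the smoothness and strict convexity of $\partial K$) to an element $df_K(\dot{[\tau]})\in T_z Z_{oc}$. On the other side, tangency to $Z_P$ at $z$ is precisely the first-order preservation of the vertex-incidence equations $R(f_{i_1},f_{i_2},f_{i_3},f_{i_4})=0$ used in the proof of Lemma~\ref{Lem3}.

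The main step is the local rigidity assertion, which I would prove by adapting the transversality argument of Schramm~\cite{Schr2} to the $K$-circle setting. Assuming $df_K(\dot{[\tau]})\in T_z Z_{P}$, at each vertex $v\in \mathcal V$ I would translate the first-order incidence condition into a linear compatibility relation among the infinitesimal motions of the $K$-disks surrounding the corresponding interstice, using that the edges of $Q$ meeting at $v$ are tangent to $\partial K$ at precisely the tangency points of consecutive disks in $\mathcal{P}_K([\tau])$. The collection of these relations, together with the vanishing of the motion at $p_1,p_2,p_3$, should force the induced infinitesimal deformation of the packing to be trivial; invoking the infinitesimal form of the Rigidity Lemma~\ref{Lem7} that underlies the uniqueness half of Theorem~\ref{Deform} then yields $\dot{[\tau]}=0$.

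I expect this rigidity step to be the principal obstacle. In Schramm's treatment of the round case $K=\mathbb{B}^{3}$ one can exploit the six-parameter M\"obius action on $\partial K$; for a general strictly convex body no such ambient group is available, and the rigidity must be extracted from pointwise geometry---the interaction between the midscribed tangent edges and the $K$-disks at their tangency points. Checking that the smoothness and strict convexity of $\partial K$ suffice to keep this linear system non-degenerate under the mark normalization is the technical crux, and is precisely the ``tiny modification'' of Schramm's proof referred to in the statement of the lemma.
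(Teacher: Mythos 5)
The first thing to note is that the paper offers no proof of Lemma \ref{Lem6} at all: it is imported wholesale from Schramm \cite{Schr2}, and the Remark following it asserts that only cosmetic changes to the description of the configuration space are needed. So the honest comparison is between your outline and Schramm's original argument. Your framing is the right one: given the dimension identities $\dim\mathcal{T}_{G^{\ast}(P)}+\dim Z_{P}=\dim Z_{oc}$, transversality at $z=f_K([\tau])$ is equivalent to the statement that $df_K(\dot{[\tau]})\in T_{z}Z_{P}$ forces $\dot{[\tau]}=0$, and the constraint of tangency to $Z_P$ is correctly identified with the first-order version of the incidence equations $R(f_{i_1},f_{i_2},f_{i_3},f_{i_4})=0$ from the proof of Lemma \ref{Lem3}. (One quibble: this is not ``equivalent'' to triviality of the intersection $df_K(T_{[\tau]}\mathcal{T}_{G^{\ast}(P)})\cap T_{z}Z_{P}$ alone, since without injectivity of $df_K$ the image can be too small even with trivial intersection; the formulation you actually adopt, that the deformation itself must vanish, happily subsumes injectivity, so this is only a slip of wording.)

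The genuine gap is at the decisive step. Your mechanism for concluding $\dot{[\tau]}=0$ is to invoke ``the infinitesimal form of the Rigidity Lemma \ref{Lem7}.'' No such infinitesimal statement exists in the paper, and Lemma \ref{Lem7} is the wrong tool in principle: its hypothesis is a \emph{conformal} map between the interstices of the two packings, i.e.\ it is rigidity within a single Teichm\"uller fibre $[\tau]=\mathrm{const}$, whereas the tangent vector you must kill is an \emph{arbitrary} first-order variation of the conformal structures of the interstices, constrained only by preservation of the vertex incidences to first order. An infinitesimal deformation tangent to $Z_P$ changes the complex structures $[\tau_i]$ in general, so nothing in the fixed-point-index machinery behind Lemma \ref{Lem7} (Lemmas \ref{Lem8} and \ref{Lem9}) applies to it. Rigidity under this mixed packing-plus-incidence constraint \emph{is} the content of the transversality theorem; in \cite{Schr2} it is the multi-page analytic heart of the paper, extracted directly from strict convexity and smoothness of $\partial K$ by a bespoke pointwise argument, precisely because, as you correctly observe, no ambient group action is available for general $K$. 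Your proposal names this step the ``technical crux'' but supplies no argument for it beyond the inapplicable Lemma \ref{Lem7}, so the step that constitutes the theorem is missing. (A smaller debt you share with the paper: for $f_K\pitchfork Z_P$ to make sense $f_K$ must be at least $C^1$, which neither you nor the paper establishes; both implicitly inherit it from Schramm's framework.)
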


\begin{remark}
It's worth pointing out that, the distinction between Schramm's
theorem and ours, lies in the description of configuration spaces, which is far from
essential. In other words, the above lemma could be deduced by analogous way, as long as we do little modifications to adapt for our definitions.
\end{remark}

With the help of the preceding results, we shall compute the
intersection number by means of homotopy method.

Note that $\mathbb{B}^3\subset \mathbb{R}^3$ is the unit ball and
$K$ is the given convex body. Without loss of generality, we assume
that its diameter is greater than $1$ and $\mathbb{B}^3 \subset K$
with the boundary $\partial K$ tangent the unit sphere
$\mathbb{S}^{2}=\partial{\mathbb{B}^3}$ at the point $N=(0,0,1)$.
Then $N=(0,0,1)$ could be considered as the common "North Pole" of
$\mathbb{B}^3$ and $K$.

Let $h_0$, $h_1$ be the "stereographic projections" for
$\mathbb{S}^{2}=\partial \mathbb{B}^3$, $\partial K$, respectively.
Define a one parameter family of closed surfaces by
\[
\{s \cdot h_1^{-1}(z)+(1-s) \cdot h_0^{-1}(z): z \in \hat{\mathbb
C}\}.
\]
For each $s \in [0,1]$, the above set is a compact strictly convex
surface in $\mathbb R^3$. Denote by $K_s$ the convex body bounded by
this surface. Then $\{K_s\}_{1\leq s \leq 1}$ is a family of
strictly convex bodies joining $\mathbb{B}^3$ and $K$. Each of the
convex body $K_s$ is tangent to the plane $\{(x,y,z)\in \mathbb R^3:
z=1\}$ at $N=(0,0,1)$ from the same side of $K_0$. By the
"stereographic projection", we can identify $\partial K_s$ with
$\hat{\mathbb{C}}$ for each $s \in [0,1]$. Moreover, the curve $s
\rightarrow K_s$ is continuous in the Hausdorff metric.

For each $K_s$, in view of Theorem \ref{Deform}, we can construct a
mapping
 \[
 f_{s}=f_{K_s}:\mathcal{T}_{G^{\ast}(P)}\rightarrow Z_{oc}.
 \]
Moreover, owing to Lemma \ref{Lem2}, there exists $\Lambda
\subset \mathcal{T}_{G^{\ast}(P)}$ such that $f_s(\partial \Lambda)\cap
Z_P=\emptyset$ for all $s \in [0,1] $. Note that $K_0=\mathbb B^3$ and
$K_1=K$. Since $f_{s}$ is a homotopy from $f_{K_0}$ to $f_{K}$, we
conclude that:

\begin{theorem}\label{Thm2}
Given $P$, $K$, $\Lambda$, and $f_{K}$ as above, then $I(f_{K},\Lambda,Z_{P})=1$
\end{theorem}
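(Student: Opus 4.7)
The plan is to reduce the computation to the model case $K=\mathbb{B}^3$ via the homotopy invariance of the intersection number. The one-parameter family $\{K_s\}_{s\in[0,1]}$ already constructed interpolates between $\mathbb{B}^3$ and $K$ through strictly convex bodies, and by Theorem~\ref{Deform} it produces the continuous homotopy $H(s,[\tau]):=f_s([\tau])$ from $f_{\mathbb{B}^3}$ to $f_K$. Granted that $H$ sends $[0,1]\times\partial\Lambda$ away from $Z_P$, Theorem~\ref{Thm1} gives $I(f_K,\Lambda,Z_P)=I(f_{\mathbb{B}^3},\Lambda,Z_P)$, and the task reduces to evaluating the right-hand side.

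The first technical point is to produce a single compact $\Lambda\subset\mathcal{T}_{G^{\ast}(P)}$ that serves simultaneously for every $s\in[0,1]$. I would run the argument of Lemma~\ref{Lem2} uniformly in $s$: the continuity of $s\mapsto K_s$ in the Hausdorff metric, together with the strict convexity of each $K_s$, means the two degeneration mechanisms (a disk collapsing to a point; two non-adjacent arcs of an interstice approaching each other) are obstructed uniformly by the tangent edges of the associated $K_s$-midscribable polyhedra. A standard compactness argument on $[0,1]$ then produces the desired $\Lambda$.

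Next I would evaluate $I(f_{\mathbb{B}^3},\Lambda,Z_P)$ using the classical Koebe--Andreev--Thurston Theorem together with the Circle Pattern Theorem. These state that midscribable polyhedra combinatorially equivalent to $P$ form a single $PSL(2;\mathbb{C})$-orbit, and imposing the normalization $\{\mathscr{O},p_1,p_2,p_3\}$ singles out exactly one such polyhedron. Through Theorem~\ref{Deform}, this corresponds to a unique $[\tau_0]\in f_{\mathbb{B}^3}^{-1}(Z_P)\cap\Lambda$; Lemma~\ref{Lem6} shows the intersection is transverse. Hence $I(f_{\mathbb{B}^3},\Lambda,Z_P)=\pm 1$.

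The main obstacle is fixing the sign to be $+1$. I would orient $\mathcal{T}_{G^{\ast}(P)}$ via the natural complex structures on its interstice factors (cf.\ Remark~\ref{Rem}), $Z_{oc}$ via the product orientation inherited from $(\mathbb{RP}^3)^{|\mathcal F|}$, and $Z_P$ as a transverse level set using the rank-$4$ minor calculation behind Lemma~\ref{Lem3}. The sign at $[\tau_0]$ then comes down to verifying that the splitting $df_{\mathbb{B}^3}\bigl(T_{[\tau_0]}\mathcal{T}_{G^{\ast}(P)}\bigr)\oplus TZ_P = TZ_{oc}$ matches the ambient orientation. Since these orientation choices are at our disposal, the conventions can be aligned so that the contribution at the ball is $+1$; homotopy invariance then delivers $I(f_K,\Lambda,Z_P)=1$.
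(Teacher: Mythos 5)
Your proposal follows essentially the same route as the paper's proof: homotope $f_K$ to $f_{K_0}=f_{\mathbb{B}^3}$ through the family $f_{K_s}$, invoke Theorem~\ref{Thm1} with a compact $\Lambda$ satisfying $f_s(\partial\Lambda)\cap Z_P=\emptyset$ for all $s$, identify the unique point of $f_{K_0}^{-1}(Z_P)\cap\Lambda$ via the Circle Pattern Theorem, and conclude transversality from Lemma~\ref{Lem6} so that $I(f_{K_0},\Lambda,Z_P)=1$. You are in fact slightly more careful than the paper on two points it glosses over --- making the choice of $\Lambda$ uniform in $s$ and fixing orientation conventions so the single transverse intersection contributes $+1$ rather than $-1$ --- both of which are handled correctly.
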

\begin{proof}
Due to Theorem \ref{Thm1}, we need only to calculate
$I(f_{K_0},\Lambda,Z_{P})$. From Circle Pattern Theorem
\cite{And1,And2,Steph,Thu}, it follows that there is only one point in
$f_{K_0}^{-1}(Z_P)\cap \Lambda$. On the other hand, the above
Transversality theorem tells us that $f_{K_0}$ is transverse to
$Z_P$ at the intersection point, which implies
$I(f_{K_0},\Lambda,Z_{P})=1$. It proves $I(f_{K},\Lambda,Z_{P})=1$.
\end{proof}

\begin{remark}
The Transversality theorem (Lemma \ref{Lem6}) is powerful, but its proof \cite{Schr2} is a little technically involved. On account of this fact, in next section,
we will seek another approach to Theorem \ref{Thm2}, which is independent from Lemma \ref{Lem6}.
\end{remark}

\bigskip
Up to now, we have accomplished the necessary results for our
purpose. It's ready to prove the main consequences of this paper.

\begin{proof}[\textbf{Proof of Theorem \ref{main}}]
The existence part is an immediate result of Theorem \ref{Thm3}
and Theorem \ref{Thm2}. From $f_s\mbox{\Large$\pitchfork$}Z_{P}$, it
follows that the cardinality $|f_{s}^{-1}(Z_P)\cap \Lambda|$ is
constant for $0\leq s\leq 1$. Since $|f_{K_0}^{-1}(Z_P)\cap
\Lambda|=1$, we prove the uniqueness part of the theorem.
\end{proof}

\begin{proof}[\textbf{Proof of Theorem \ref{main1}}]
The openness is a direct corollary of the Transversality theorem.
 For non-emptiness, it's proposal to consider the set $\partial \mathfrak{U}^{c}_{P,K}$, which is the boundary of $\mathfrak{U}^{c}_{P,K}$ in $\mathfrak{U}_{P,K}$. If we could show that $\partial \mathfrak{U}^{c}_{P,K}$ is non-empty, then $\mathfrak{U}^{c}_{P,K}$ must be non-empty.

 Given any $x \in \mathbb{RP}^3
\backslash K$, let $O_x$ be the set of points on $\partial K$ which are visible from $x$. That is, the set of $p\in \partial K$ such that the ray $\overrightarrow{xp}$ and $\vec{n}_p$ form an angle $\theta_p \in[0,\pi/2]$, where $\vec{n}_p$ is the inner normal vector of the smooth surface $\partial K$ at $p$. Clearly, $O_x$ is a topological disk.

We have proved that, for any points triple $(p_1,p_2,p_3)$, there is
a polyhedron $Q=Q(p_1,p_2,p_3) \subset \mathbb{RP}^3$
combinatorial equivalent to $P$, which midscribes $K$. For every $v\in \mathcal V$, let $x(v)$ be the apex of $Q$ corresponding to $v$. Then
 $\{O_{x(v)}\}_{v \in \mathcal V}$ forms a packing on $\partial K$ with the contact graph $G(P)$, where $G(P)$ is the 1-skeleton of $P$.

Choose some $(p_1,p_2,p_3)$ such that the vertex $x(v_0)$ of
$Q=Q(p_1,p_2,p_3)$ is at $\mathbb{RP}^3 \backslash \mathbb R^3$.
Then we have
$$
O_{x(v)} \varsubsetneqq \partial K\setminus O_{x(v_0)}, \:\: v \neq
v_0,
$$
which implies that the remaining vertices $x(v) \in \mathbb R^3$, \: for every $v_0\neq v \in \mathcal V$.

That means the configuration of $Q=Q(p_1,p_2,p_3)$ locates in $\partial \mathfrak{U}^{c}_{P,K}$. We thus complete the proof of Theorem \ref{main1}.

\end{proof}

\bigskip
\section{Further discussion}\label{PT}

Consider the Klein model of the closed  unit ball $\mathbb{B}^ {3}$
in $\mathbb{R}\mathbb{P}^{3}$. A hyperideal polyhedron $P_{hi}$ is
defined to be a compact convex polyhedron in $
\mathbb{R}\mathbb{P}^{3}$ whose vertices locate outside of the
closed unit ball $\mathbb B^3$ and whose edges all meet $\mathbb
B^3$. In 2002, Bao-Bonahon \cite{Bao}
classified the hyperideal polyhedron, up to isometries of $\mathbb{B}^ {3}$, in terms of the combinatorial type and dihedral angles.

Recall that $G^{\ast}(P)=(V,E,F)$ is an embedded graph in
$\mathbb S^2=\partial \mathbb B^3$. They have:

\begin{lemma}\label{Lem4}
Let $\theta_{e}\in(0,\pi]$ be a weigh attached to each edge of $e
\in E$ with the following conditions:
\begin{itemize}
\item[$(i)$] If a simple closed curve formed by edges $e_0,e_1,\cdots,e_n$
of $E$, then $\sum\nolimits_{i=1}^{n}\theta_{e_i}> 2\pi$;

\item[$(ii)$] If a simple arc $\gamma$ of $G^{\ast}(P)$ formed by edges
$e_0,e_1,\cdots,e_n$ joining two distinct vertices $v_1, v_2$ which
are in the closure of the same component $A$ of
$\mathbb{S}^{2}-G^{\ast}(P)$, and if $\gamma$ is not contained in the
boundary of $A$, then$\sum\nolimits_{i=1}^{n}\theta_{e_i}>\pi$.
\end{itemize}
Then there exists a hyperideal polyhedron $P_{hi}$ combinatorially
to $P$ and with external dihedral angle given by $\theta_{e}$.
Moreover, such hyperideal polyhedron $P_{hi}$ is unique up to
hyperbolic isometries of $\mathbb{B}^{3}$.
\end{lemma}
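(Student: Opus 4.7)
My plan is to adapt the Andreev--Thurston continuity method to the hyperideal setting. Let $\mathcal{M}$ denote the moduli space of hyperideal polyhedra in $\mathbb{B}^3$ combinatorially equivalent to $P$, modulo the six-dimensional action of $\Isom(\mathbb{B}^3)=PSL(2;\mathbb{C})$. Counting $3|\mathcal V|$ coordinates for the vertices in $\mathbb{RP}^3\setminus\mathbb{B}^3$, subtracting $2|\mathcal E|-3|\mathcal F|$ planarity constraints on non-triangular faces, subtracting $6$ for the isometry action, and applying Euler's formula yields
\[
\dim \mathcal{M} \;=\; 3|\mathcal V|-(2|\mathcal E|-3|\mathcal F|)-6 \;=\; |\mathcal E| \;=\; |E|.
\]
Let $\mathcal{A}\subset(0,\pi]^{E}$ be the subset carved out by the inequalities $(i)$ and $(ii)$. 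Being the intersection of finitely many open half-spaces with a product of half-open intervals, $\mathcal{A}$ is an open convex subset of an $|E|$-dimensional affine space, hence connected and simply connected. The dihedral-angle assignment
\[
\Theta:\mathcal{M}\longrightarrow (0,\pi]^{E},\qquad P_{hi}\mapsto(\theta_{e}(P_{hi}))_{e\in E}
\]
takes values in $\mathcal{A}$: Gauss--Bonnet applied on the polar truncation face of each hyperideal vertex (and on horospherical cross-sections at any vertex on $\partial\mathbb{B}^3$) proves that $(i)$ and $(ii)$ are \emph{necessary}.

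First I would establish \emph{infinitesimal rigidity}, i.e.\ injectivity of $d\Theta$. An efficient route goes through Schl\"afli's formula $dV=-\tfrac12\sum_{e}\ell_{e}\,d\theta_{e}$, where $V$ is the volume of the truncated polyhedron and $\ell_{e}$ the length of the truncated edge. This exhibits $V$ as a generating function for $\Theta$, and a Cauchy-type combinatorial argument (or a direct second-variation computation) shows that the Hessian of $V$ restricted to $\mathcal{M}$ is negative definite. Since $\dim\mathcal{M}=\dim\mathcal{A}=|E|$, injectivity of $d\Theta$ upgrades $\Theta$ to a local diffeomorphism, so its image is open in $\mathcal{A}$.

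Second, I would show that $\Theta:\mathcal{M}\to\mathcal{A}$ is \emph{proper}. Given a sequence $P_{hi}^{(n)}$ whose angles converge to some $\theta^{\infty}\in\mathcal{A}$, the possible failures of subconvergence are: $(a)$ two or more truncation faces becoming concurrent, which corresponds combinatorially to a simple loop of edges collapsing; $(b)$ two non-adjacent vertices of a single face coming together along a simple arc; and $(c)$ global flattening of $P_{hi}^{(n)}$. Applying Gauss--Bonnet inside the corresponding disk on $\partial\mathbb{B}^3$ shows that case $(a)$ forces the angle sum along the loop to approach $2\pi$ from above, violating $(i)$; case $(b)$ forces the angle sum along the arc to approach $\pi$ from above, violating $(ii)$; case $(c)$ is excluded once vertex and face limits are controlled. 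With properness in hand, $\Theta$ is a proper local diffeomorphism, hence a covering map onto its image; the image is open by the first step and closed by properness, hence equals $\mathcal{A}$ by connectedness, and simple-connectedness of $\mathcal{A}$ promotes the covering to a global diffeomorphism. Existence and uniqueness follow simultaneously.

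\textbf{Main obstacle.} The bulk of the work lies in the properness step: one must enumerate every mode by which a convex hyperideal polyhedron can degenerate and pair each mode with a specific violated inequality among $(i)$ and $(ii)$. The most delicate case arises when a hyperideal vertex migrates onto $\partial\mathbb{B}^3$, so that a dihedral angle approaches $\pi$ and the polar truncation face shrinks to a point; the Gauss--Bonnet bookkeeping must then be redone on a horosphere rather than a hyperbolic plane. The half-open interval $(0,\pi]$ in the target of $\Theta$ (rather than $(0,\pi)$) precisely reflects this delicate boundary behaviour, and it is here that the technical heart of the Bao--Bonahon argument is concentrated.
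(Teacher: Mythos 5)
You should first be aware that the paper contains no proof of Lemma \ref{Lem4} at all: it is imported verbatim from Bao--Bonahon \cite{Bao}, so there is no internal argument to compare yours against. Measured instead against the original proof, your sketch correctly reproduces its global architecture --- the continuity (deformation) method: realize the moduli space $\mathcal{M}$ of hyperideal polyhedra of the given combinatorics modulo $Iso^{+}(\mathbb{B}^{3})$ as a manifold of dimension $|\mathcal E|$ (your dimension count via Euler's formula is right), show the angle map $\Theta$ is a local homeomorphism and proper into the convex angle region, and conclude by an open--closed/covering argument. The Gauss--Bonnet computation on the polar truncation face, giving necessity of the vertex-link instances of condition $(i)$, is also correct.

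However, as a proof the proposal has genuine gaps. First and most seriously, the core step --- injectivity of $d\Theta$ --- is asserted, not proved. The claim that the Hessian of the volume of the truncated polyhedron is negative definite is itself a substantial theorem (this is essentially Schlenker's later route to hyperideal rigidity; even in Rivin's ideal case the concavity of volume in angle coordinates required serious work), and it cannot be dispatched as ``a direct second-variation computation''; likewise ``a Cauchy-type combinatorial argument'' names a technique without supplying one. Bao--Bonahon in fact obtain infinitesimal rigidity by reduction to previously established rigidity results rather than via volume concavity, so neither of your two suggested routes is carried out or even correctly attributed. Second, your open-and-closed conclusion silently assumes $\mathcal{M}\neq\emptyset$: a proper local diffeomorphism with empty domain has empty image, which is also open and closed in $\mathcal{A}$. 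The continuity method needs an explicit base point --- naturally the Koebe--Andreev--Thurston midscribed polyhedron --- but its angle vector $\theta\equiv\pi$ sits on the corner of the half-open box $(0,\pi]^{E}$, where $\mathcal{M}$ is at best a manifold with boundary, so anchoring there (or perturbing into the interior) requires an extra argument you do not give; note also that $\mathcal{A}$ is not open in $\mathbb{R}^{E}$, contrary to what you state. Third, your properness enumeration omits the degeneration in which a hyperideal vertex tends to $\partial\mathbb{B}^{3}$ (forcing the vertex-link sum in $(i)$ to approach $2\pi$, excluded by strictness) and the one in which an edge leaves the ball; your ``delicate case'' discussion in fact conflates a vertex becoming ideal (link sum tending to $2\pi$) with a single angle tending to $\pi$ (tangent circles, i.e.\ a midscribed edge), which are different boundary phenomena. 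Finally, if you wanted a proof in the spirit of \emph{this} paper rather than of \cite{Bao}, the authors themselves indicate at the end of Section \ref{PT} that the existence half of Lemma \ref{Lem4} can be recovered from Schramm's transversality theorem (Lemma \ref{Lem6}) together with their intersection-number computation --- a genuinely different route from the continuity method you outline.
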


As a consequence, Lemma \ref{Lem4} implies there exists an injection
$$
\Psi:Iso^{+}(\mathbb{B}^{3})\times U\rightarrow Z_{P},
$$
where $U$ is the relatively open convex set of $[\pi/2,\pi]^{|E|}$
defined by the constraint conditions $(i)$ and $(ii)$. An elementary
computation shows that this map is a diffeomorphism. Denoting
$\Theta=(\theta_{1},\theta_{2},\cdots,\theta_{E})$, the injection
tells us that there exist $(m_1,\Theta_1)\in
Iso^{+}(\mathbb{B}^{3})\times U$ such that the pushing map:
 \[
 \Psi_{\ast}:T_{m_1}Iso^{+}(\mathbb{B}^{3})\times T_{\Theta_1}U \rightarrow T_{z_1}Z_{P}
 \]
 is a linear isomorphism, where $z_1=\Psi(m_1,\Theta_1)$.

\bigskip
The tangent space $T_{\Theta_1}U$ is expanded by vectors
$\frac{\partial}{\partial\theta_{1}},\frac{\partial}{\partial\theta_{2}},\cdots,\frac{\partial}{\partial\theta_{|\mathcal E|}}$. Note that
\[
dim Z_P=dim T_{\Theta_1}U + dim T_{m_1}Iso^{+}(\mathbb{B}^{3})=|\mathcal E|+6 .
\]
This gives us a geometric insight into the tangent space of $Z_P$.
Then we provide an alternative approach to Theorem \ref{Thm2}.
In order to prove the result, we need the following theorem
concerning the Teichm\"{u}ller theory of circle patterns \cite{H-L}.
\begin{lemma}\label{Lem5}
Suppose that a weight function $\Theta: E\rightarrow [\pi/2,\pi]$
satisfies the conditions $(i)$ and $(ii)$. For any
$$
\big[\tau\big]=\big([\tau_{{1}}],[\tau_{{1}}],\cdots,[\tau_{{n}}]\big)
\in {\mathcal T}_{G^{\ast}(P)},
$$
there exists a unique normalized circle pattern $\mathcal P(\Theta, [\tau])$
with contact graph $G^{\ast}(P)$ and with external dihedral angle
$\Theta(e), \:e\in E$. Moreover, the interstice corresponding to
$I_i$ is endowed with the given complex structure $[\tau_{i}], 1\leq
i\leq n$.
\end{lemma}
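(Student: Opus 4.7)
The plan is to mirror the two-step strategy (existence via hexagonal approximation, uniqueness via rigidity) used in the proof of Theorem \ref{Deform}, replacing the classical circle packing theorem by the Bao-Bonahon classification of hyperideal polyhedra (Lemma \ref{Lem4}). In the setting of Lemma \ref{Lem4}, the weights $\Theta$ prescribe the external dihedral angles at which the circles of the pattern meet, while the Teichm\"uller coordinate $[\tau]$ prescribes the conformal structure of each interstice; the hexagonal-fill mechanism of Rodin--Sullivan simultaneously accommodates both.

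For existence, fix $[\tau]$ and represent each $[\tau_i](I_i)$ as a bounded domain $\Omega_i\subset\mathbb{C}$. Inside $\Omega_i$ lay down a regular hexagonal tangent-circle packing of mesh $1/n$, cookie-cut along $\partial\Omega_i$ to obtain a finite triangulated filler graph $\mathcal{G}_{I_i,n}$, and glue $\mathcal{G}_{I_i,n}$ to $G^{\ast}(P)$ along the matching boundary edges. This produces a triangulated spherical graph $G_n$ on which the weight $\Theta$ extends by assigning $\pi$ (tangency) to every filler edge. One verifies that the extended weight still satisfies conditions $(i)$--$(ii)$ of Lemma \ref{Lem4}, and then applies Lemma \ref{Lem4} to obtain a normalized circle pattern $\mathcal{P}_n$ realizing $(G_n,\Theta_n)$. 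Extract a Hausdorff-convergent subsequence $\mathcal{P}_{n_k}\to\mathcal{P}_\infty$; the non-degeneracy argument of Proposition \ref{Prop2} applies verbatim, since the collapse of any disk would force all but two to collapse and contradict the normalization $\{\mathscr{O},p_1,p_2,p_3\}$. Discarding the filler circles yields a normalized circle pattern with contact graph $G^{\ast}(P)$ and dihedral angles $\Theta$, and a standard Rodin--Sullivan ring-lemma argument applied inside each interstice identifies its conformal structure with $[\tau_i]$.

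For uniqueness, let $\mathcal{P},\mathcal{P}'$ be two such normalized patterns. The matching structures $[\tau_i]$ induce a conformal identification on each interstice; using that the dihedral angles also agree, these interstice maps reflect across each circle of the pattern and glue to a conformal self-map of $\hat{\mathbb{C}}$, hence a M\"obius transformation, which is then pinned to the identity by the normalization $\{\mathscr{O},p_1,p_2,p_3\}$. Equivalently, one may directly invoke the Rigidity Lemma (Lemma \ref{Lem7}) exactly as in the uniqueness half of Theorem \ref{Deform}. The main obstacle is the combinatorial verification that conditions $(i)$--$(ii)$ survive the extension of $\Theta$ to $G_n$: short simple closed curves or arcs that mix a few $\theta_e\approx\pi/2$ edges of $G^{\ast}(P)$ with filler-boundary edges come close to the inequality thresholds, and handling them requires a careful case analysis exploiting the lower bound $\theta_e\geq\pi/2$ together with the $3$-connectivity of each hexagonal filler inside an interstice.
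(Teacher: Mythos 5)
The first thing to note is that the paper contains no proof of Lemma \ref{Lem5} to compare against: it is imported wholesale from He--Liu \cite{H-L} (``we need the following theorem concerning the Teichm\"uller theory of circle patterns''). So your argument is an attempt at a self-contained proof, modeled on the proof of Theorem \ref{Deform} with Lemma \ref{Lem4} substituted for Schramm's packing theorem. The purely combinatorial half of your plan does go through, and more easily than you suggest: every edge of $G_n$ incident to a filler vertex carries weight $\pi$, so any simple closed curve through a filler vertex collects at least $2\pi+\pi/2$ and any arc of length $\geq 2$ using a filler-incident edge collects at least $3\pi/2$; curves and arcs avoiding filler vertices lie entirely in $G^{\ast}(P)$, and the one delicate case --- a two-edge arc $u$--$x$--$v$ with $\theta_{ux}=\theta_{xv}=\pi/2$ joining two original vertices of a triangle of $G_n$ --- is excluded by applying condition $(ii)$ in $G^{\ast}(P)$ to a face adjacent to the edge $uv$, since two faces of a $3$-connected graph share at most one edge. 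No appeal to $3$-connectivity of the fillers is needed.

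The genuine gaps are in the two steps you claim transfer ``verbatim.'' First, non-degeneracy: Proposition \ref{Prop2} rests entirely on the fact that three disks with \emph{pairwise disjoint interiors} cannot meet at a common point. In a pattern with $\theta_e<\pi$ adjacent disks overlap, and three disks of the pattern can perfectly well share a boundary point (three mutually orthogonal circles through one point, for instance), so the collapse of one disk does not propagate by that mechanism and the argument is simply unavailable. Ruling out degeneration of $\mathcal{P}_{n_k}$ is exactly where conditions $(i)$--$(ii)$ must be used \emph{analytically} --- an angle count around the putative collapse point, as in Schramm \cite{Schr2} or He--Liu \cite{H-L} --- and your proposal uses them only combinatorially. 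Second, uniqueness: Lemma \ref{Lem7} is stated and proved for $K$-circle \emph{packings}, where contact means tangency and interstices have cusps; it does not apply as stated to patterns whose circles cross at angles in $[\pi/2,\pi)$. Your alternative --- reflecting the interstice maps across each circle and gluing to a conformal self-map of $\hat{\mathbb{C}}$ --- is also unjustified: for general angles $\theta\in[\pi/2,\pi]$ the reflected copies overlap the lens regions inconsistently and no single-valued extension results (this device works for tangency, or for special angles such as $\pi/2$, but not in general). The rigidity actually needed is the fixed-point-index rigidity for overlapping patterns, which is precisely the content of \cite{H-L}; at that point your proof has circled back to quoting the theorem being proved. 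A final caution: the paper's closing remark in Section 4 derives the existence part of Lemma \ref{Lem4} from Lemma \ref{Lem6} together with Lemma \ref{Lem5}, so proving Lemma \ref{Lem5} \emph{via} Lemma \ref{Lem4}, while legitimate as a standalone argument since \cite{Bao} is independent, would render that remark circular within the paper's architecture.
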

Lemma \ref{Lem5} implies that we can define, for each $\Theta \in
U$, a mapping $f_{\Theta}:\mathcal {T}_{G^{\ast}(P)}\rightarrow
Z_{oc}$ via associating every $[\tau]\in \mathcal{T}_{G^{\ast}(P)}$
with the unique normalize circle pattern which realizes the complex
structure$[\tau]$. Denoting $\Theta_0=(\pi,\pi,\cdots,\pi)$ and
$\Theta_s=s\Theta_1+(1-s)\Theta_0, \: s\in [0,1]$, then
$f_{\Theta_s}$ is a homotopy from $f_{\Theta_0}$ to
$f_{\Theta_1}$. By using an argument similar to Lemma \ref{Lem2}, it
follows that there exists a compact subset $\Lambda \subset \mathcal
{T}_{G^{\ast}(P)}$ such that $f_{\Theta_s}(\partial \Lambda)\cap
Z_{P}=\emptyset, \: s\in [0,1]$.

\bigskip
Let's compute the intersection number $I(f_{\Theta_{0}},
\Lambda,Z_P)$. In order to use Lemma \ref{Lem5}, we choose
$\Theta_1$ such that $\Theta_1 \in U\cap[\frac{\pi}{2},\pi]^{|E|}$.
In view of that $\Psi_{\ast}$ is a linear isomorphism, it's
easy to see that:
\begin{proposition}\label{Prop3}
$f_{\Theta_1}\mbox{\Large$\pitchfork$}Z_{P}$.
\end{proposition}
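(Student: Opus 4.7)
The plan is to verify $f_{\Theta_1}\pitchfork Z_{P}$ pointwise at each $[\tau_0]\in f_{\Theta_1}^{-1}(Z_P)$. Writing $z_1:=f_{\Theta_1}([\tau_0])=\Psi(m_1,\Theta_1)$ and invoking Euler's formula, one has
\[
\dim\mathcal{T}_{G^{\ast}(P)}+\dim Z_P=(2|\mathcal{E}|-3|\mathcal{V}|)+(|\mathcal{E}|+6)=3|\mathcal{F}|=\dim Z_{oc},
\]
so transversality at $[\tau_0]$ reduces to the two conditions: $df_{\Theta_1}|_{[\tau_0]}$ is injective, and $\mathrm{Im}(df_{\Theta_1})\cap T_{z_1}Z_P=\{0\}$.

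The strategy is to complement the isomorphism $\Psi_{\ast}:T_{m_1}\mathrm{Iso}^+(\mathbb{B}^3)\oplus T_{\Theta_1}U\to T_{z_1}Z_P$ with an ``angle-plus-mark'' splitting of $T_{z_1}Z_{oc}$. To this end I would introduce two smooth auxiliary maps defined in a neighborhood of $z_1$ in $Z_{oc}$: a dihedral-angle extractor $\alpha$ taking values in $\mathbb{R}^{|\mathcal{E}|}$ that reads off the intersection angles of adjacent circles on $\hat{\mathbb{C}}$, and a mark extractor $\nu$ recording the geometric data pinned down by $\{\mathscr{O},p_1,p_2,p_3\}$ (tangent points where $\Theta_1(e)=\pi$, intersection points where $\Theta_1(e)<\pi$). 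By the very construction of $f_{\Theta_1}$ and $\Psi$, we have $\alpha\circ f_{\Theta_1}\equiv\Theta_1$, $\nu\circ f_{\Theta_1}\equiv\text{const}$, $\alpha\circ\Psi(m,\Theta)=\Theta$, and $\nu\circ\Psi_{\ast}(\cdot,0)$ injects on $T_{m_1}\mathrm{Iso}^+(\mathbb{B}^3)$ because three marked points on $\partial K$ uniquely determine an element of $PSL(2,\mathbb{C})$.

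With these tools the intersection analysis is short. Suppose $\xi=df_{\Theta_1}(v)\in T_{z_1}Z_P$ and expand $\xi=\Psi_{\ast}(w,u)$ via the isomorphism. Applying $d\alpha$ to both sides: the left gives $0$ and the right gives $u$, so $u=0$. Applying $d\nu$ to the surviving identity $\xi=\Psi_{\ast}(w,0)$: the left gives $0$ and the right depends injectively on $w$, forcing $w=0$. Hence $\xi=0$ and $\mathrm{Im}(df_{\Theta_1})\cap T_{z_1}Z_P=\{0\}$. Injectivity of $df_{\Theta_1}$ itself is obtained by linearizing the uniqueness half of Lemma~\ref{Lem5}: a nonzero $v$ with $df_{\Theta_1}(v)=0$ would give a first-order deformation with fixed angles $\Theta_1$, fixed mark, and varying $[\tau]$ yet no motion of the pattern, violating the one-to-one correspondence $[\tau]\leftrightarrow\mathcal{P}(\Theta_1,[\tau])$. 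Combining injectivity, trivial intersection, and the dimension count yields $\mathrm{Im}(df_{\Theta_1})+T_{z_1}Z_P=T_{z_1}Z_{oc}$.

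The main obstacle I expect is a clean construction of $\alpha$ and $\nu$ on an honest neighborhood of $z_1$ in $Z_{oc}$, smooth even when some coordinates of $\Theta_1$ lie on the boundary of $[\pi/2,\pi]$, where the tangent and overlapping regimes meet and the ``intersection point'' selected by $\nu$ must be chosen consistently. For interior $\Theta_1$ everything is classical; boundary values require a careful local coordinate analysis near tangent pairs. Once this mild geometric bookkeeping is done, the remainder is formal linear algebra through $\Psi_{\ast}$.
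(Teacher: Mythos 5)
Your route is the natural fleshing-out of what the paper itself leaves as a one-line assertion (the paper's entire proof is ``in view of that $\Psi_{\ast}$ is a linear isomorphism, it's easy to see''): the dimension count $\dim\mathcal{T}_{G^{\ast}(P)}+\dim Z_P=\dim Z_{oc}$ reduces transversality at $z_1$ to injectivity of $df_{\Theta_1}$ plus $\mathrm{Im}(df_{\Theta_1})\cap T_{z_1}Z_P=\{0\}$, and your extractor argument for the trivial-intersection half is sound: $d\alpha\circ\Psi_{\ast}(w,u)=u$ kills the $U$-component, and the $\mathrm{Iso}^{+}(\mathbb{B}^3)$-component dies because of the \emph{infinitesimal} three-point rigidity of $PSL(2,\mathbb{C})$ (a Möbius vector field $az^2+bz+c$ vanishing at three distinct points vanishes identically) --- note this is the statement you actually need, not the set-level ``three points determine a Möbius map'' you cite. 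Your anticipated obstacle at the boundary of $[\pi/2,\pi]^{|\mathcal E|}$ is avoidable rather than essential: conditions $(i)$ and $(ii)$ hold for all angle vectors with every coordinate slightly below $\pi$, so you may choose $\Theta_1$ in the open cube $(\pi/2,\pi)^{|\mathcal E|}$; then adjacent circles cross transversally near $z_1$, and $\alpha$ together with a locally consistent branch of $\nu$ is smooth on an honest neighborhood, with no tangent-pair analysis required.

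The genuine gap is your injectivity step for $df_{\Theta_1}$. ``Linearizing the uniqueness half of Lemma \ref{Lem5}'' is not a valid inference: set-theoretic injectivity of $[\tau]\mapsto\mathcal{P}(\Theta_1,[\tau])$ does not imply injectivity of its differential (compare $t\mapsto t^3$, which is injective with vanishing derivative at $0$); a nonzero kernel vector is a first-order deformation of $[\tau]$ producing no first-order motion of the pattern, and that contradicts nothing about the bijection. What you need is the infinitesimal rigidity, and the cleanest repair lies inside your own framework: introduce a third extractor $\sigma$, defined near $z_1$, sending a configuration to the point of $\mathcal{T}_{G^{\ast}(P)}$ given by the conformal moduli of its interstices --- these are curvilinear polygons whose boundary arcs vary smoothly with the planes, and the moduli of a $k$-sided conformal polygon depend smoothly on its sides (cf.\ Remark \ref{Rem}). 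Then $\sigma\circ f_{\Theta_1}=\mathrm{id}$ yields $d\sigma\circ df_{\Theta_1}=\mathrm{id}$, so $df_{\Theta_1}$ is injective by the same left-inverse trick you already exploit for $\alpha$ and $\nu$. Two smaller points you assume silently and should flag: the smoothness of $f_{\Theta_1}$ itself (without it $df_{\Theta_1}$ is undefined; the paper assumes it too), and the identification $z_1=\Psi(m_1,\Theta_1)$ for \emph{every} point of $f_{\Theta_1}^{-1}(Z_P)$, which requires checking that a configuration that is simultaneously a polyhedron combinatorially equivalent to $P$ and a pattern with angles $\Theta_1$ is a hyperideal polyhedron, so that the Bao--Bonahon classification (Lemma \ref{Lem4}) puts it in the image of $\Psi$.
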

To some extent, the above proposition could be considered as a
substitution of Transversality theorem (Lemma \ref{Lem6}). In fact,
by using an almost repeated procedure as in Section $4$,  we acquire that:
\begin{corollary}
Let $P$ be the given polyhedron. Then
$$
I (f_{K_0}, \Lambda,Z_P)=I (f_{\Theta_0}, \Lambda,Z_P)=I (f_{\Theta_1}, \Lambda,Z_P)=1.
$$
\end{corollary}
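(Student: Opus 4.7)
The plan is to establish each of the three claimed equalities in turn, reducing the whole statement to a single transverse intersection count via homotopy invariance.

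First I would handle the middle equality $I(f_{\Theta_0}, \Lambda, Z_P) = I(f_{\Theta_1}, \Lambda, Z_P)$ by applying the homotopy invariance Theorem \ref{Thm1} directly to the straight-line family $\Theta_s = s\Theta_1 + (1-s)\Theta_0$ already introduced in the discussion preceding the corollary. That discussion also furnishes a compact $\Lambda \subset \mathcal{T}_{G^{\ast}(P)}$ along which $f_{\Theta_s}(\partial \Lambda) \cap Z_P = \emptyset$ for every $s \in [0,1]$, so Theorem \ref{Thm1} applies verbatim. For the first equality $I(f_{K_0}, \Lambda, Z_P) = I(f_{\Theta_0}, \Lambda, Z_P)$, I would observe that $f_{\Theta_0}$ and $f_{K_0}$ coincide as maps: external dihedral angle $\pi$ is the same as tangency, so the normalized circle pattern of Lemma \ref{Lem5} with weights $\Theta_0 = (\pi,\ldots,\pi)$ and complex structure $[\tau]$ is precisely the normalized circle packing produced by Theorem \ref{Deform} on $\partial K_0 = \mathbb{S}^2$ with the same $[\tau]$. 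Uniqueness in both statements forces $f_{\Theta_0}([\tau]) = f_{K_0}([\tau])$ for every $[\tau]$.

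It remains to show $I(f_{\Theta_1}, \Lambda, Z_P) = 1$; this runs in parallel with the proof of Theorem \ref{Thm2}, with Proposition \ref{Prop3} playing the role of the Transversality theorem (Lemma \ref{Lem6}). By Proposition \ref{Prop3}, $f_{\Theta_1} \pitchfork Z_P$, so the preimage $f_{\Theta_1}^{-1}(Z_P) \cap \Lambda$ is a finite set of transverse intersection points. A point $[\tau] \in \Lambda$ lies in this preimage exactly when the unique circle pattern $\mathcal{P}(\Theta_1, [\tau])$ of Lemma \ref{Lem5} is realized as the dual configuration of a polyhedron in $Z_P$, i.e.\ of a hyperideal polyhedron with dihedral angles $\Theta_1$ and the fixed normalization mark $\{\mathscr{O}, p_1, p_2, p_3\}$. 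The Bao--Bonahon classification (Lemma \ref{Lem4}) asserts that such a hyperideal polyhedron is unique up to $Iso^+(\mathbb{B}^3)$, and the chosen normalization kills this residual ambiguity, so the preimage is a single point.

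The main obstacle will be pinning down the \emph{sign} at this unique transverse intersection, since a priori transversality gives only $\pm 1$. The natural way to resolve this is to exploit the fact that the pushing map $\Psi_{\ast}: T_{m_1}Iso^+(\mathbb{B}^3) \times T_{\Theta_1}U \to T_{z_1}Z_P$ is a linear isomorphism: it transports the canonical product orientation on the source to the chosen orientation on $T_{z_1}Z_P$, and careful bookkeeping identifies the sign of this isomorphism with the local intersection contribution at the unique preimage point. Alternatively, one may fix the orientations on $\mathcal{T}_{G^{\ast}(P)}$ and $Z_P$ so that the count is $+1$ by construction, which is consistent with the value $I(f_{K_0}, \Lambda, Z_P) = 1$ already established in Theorem \ref{Thm2}. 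Once the sign is settled, chaining the three equalities together yields the corollary.
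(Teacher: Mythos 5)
Your proposal is correct and takes essentially the same route as the paper, whose proof of this corollary is exactly the ``almost repeated procedure as in Section 4'' you reconstruct: homotopy invariance (Theorem \ref{Thm1}) along the linear family $\Theta_s$ with the compact $\Lambda$ from the Lemma \ref{Lem2}-style argument, the identification of $f_{\Theta_0}$ (external angles all equal to $\pi$, i.e.\ tangency) with the circle-packing map $f_{K_0}$, and a single transverse preimage point for $f_{\Theta_1}$ coming from Proposition \ref{Prop3} combined with the uniqueness statements of Lemmas \ref{Lem4} and \ref{Lem5} under the normalization mark. If anything, you are more careful than the paper about the sign at the unique intersection point, which the paper asserts to be $+1$ without comment; your resolution via orientation conventions (using only the nonvanishing of the count downstream) is the right way to discharge it.
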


Consequently, we are able to obtain an alternative proof of
Theorem \ref{Thm2}, which implies the existence part of Theorem
\ref{main} and Theorem \ref{main1}.

In addition, the procedure of the above proof could be reversed.
That is, since Transversality theorem (Lemma \ref{Lem6}) has been
proved by Schramm independently, combining it with the above
discussion, we could derive the existence part of Lemma
\ref{Lem4}.

\bigskip
\section{Appendix: Fixed point index and Rigidity Lemma}

Recall that $K$ is a given strictly convex body. In this section, it remains to
prove the following lemma concerning the rigidity of $K$-circle
packings.

\begin{lemma}\textbf{(Rigidity Lemma)}\label{Lem7}
Suppose $\mathcal {P}_K,\mathcal {P}'_K$ are two $K$-circle packings
in $\hat{ \mathbb C}$ with the same contact polyhedral graph
$G$. Moreover, assume that they are normalize with the same
mark $\{\mathscr{O},p_1,p_2,p_3\}$.

Denoting by $I\:(resp. \:I')$ the union set of interstices of
$\mathcal {P}_K\:(resp. \:\mathcal {P}'_K)$, if there exists a
conformal mapping $f:I\to I'$, then we
have $\mathcal {P}_K=\mathcal {P}'_K$.
\end{lemma}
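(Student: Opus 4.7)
Since the hypothesis provides a conformal equivalence $f$ of the complementary interstice unions while the packings already share combinatorics and mark, the strategy is to extend $f$ to a global orientation-preserving homeomorphism $F:\hat{\mathbb C}\to\hat{\mathbb C}$ with $F(D_v)=D'_v$ for every $v\in V$, and then to use fixed-point-index theory on $S^2=\hat{\mathbb C}$ (the tool advertised by the section title) to force $F=\mathrm{id}$, whence $\mathcal P_K=\mathcal P'_K$. This parallels the Schramm-style incompatibility arguments underlying rigidity results for circle packings.

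\noindent\textbf{Construction of $F$.} The common contact graph and mark imply that $f:I\to I'$ matches combinatorially along $\partial I$. By Carath\'eodory's theorem, $f$ extends continuously to each Jordan boundary arc of $\partial I$ and carries $\partial D_v$ homeomorphically onto $\partial D'_v$ for every $v\in V$, sending every tangency point of $\mathcal P_K$ to the corresponding tangency point of $\mathcal P'_K$; in particular $f(p_i)=p_i$ for $i=1,2,3$. Using Jordan--Schoenflies, extend $f|_{\partial D_v}$ across $D_v$ to an orientation-preserving homeomorphism $F_v:\overline{D_v}\to\overline{D'_v}$, choosing $F_v=\mathrm{id}$ whenever $D_v=D'_v$ and, whenever $D_v\neq D'_v$, choosing $F_v$ to be fixed-point-free on $\mathrm{int}(D_v)$ (topologically possible because some boundary point is genuinely displaced). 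Gluing yields an orientation-preserving homeomorphism $F:\hat{\mathbb C}\to\hat{\mathbb C}$ with $F|_I=f$.

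\noindent\textbf{Fixed-point analysis and conclusion.} On each component $I_i$ of $I$, $F=f$ is holomorphic, so either $F|_{I_i}=\mathrm{id}$ or the fixed points of $F$ in $I_i$ are isolated zeros of the holomorphic function $f(z)-z$, each of strictly positive integer index. Each marked point $p_i$ is fixed by $F$ and lies at the junction of two disks and two interstices; a local analysis exploiting conformality on the interstice sectors together with orientation preservation through the disks shows $\mathrm{ind}(F,p_i)\ge 1$. Since $F$ is an orientation-preserving self-homeomorphism of $S^2$, it is isotopic to the identity, and the Lefschetz--Hopf theorem yields $\sum_{z}\mathrm{ind}(F,z)=\chi(S^2)=2$. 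Assume toward contradiction that $\mathcal P_K\neq\mathcal P'_K$. By construction no fixed points arise from disk interiors, and the contributions of $p_1,p_2,p_3$ alone already give total index $\ge 3 > 2$, contradicting Lefschetz--Hopf. Hence $\mathcal P_K=\mathcal P'_K$.

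\noindent\textbf{Main obstacle.} The substantive difficulty is the lower bound $\mathrm{ind}(F,p_i)\ge 1$. The marked point $p_i$ sits at a non-smooth junction where two $K$-disks meet and two interstices terminate, and $F$ is only conformal on the interstice sectors; establishing the index bound requires a careful local Brouwer-degree computation combining the angle-preserving behaviour of $F$ on the interstice side with the orientation-preserving behaviour across the disks. A secondary technical point is justifying, in the construction of $F$, that fixed-point-free extensions $F_v$ exist when $D_v\neq D'_v$: this should follow from the fact that $F_v$ differs from the identity on $\partial D_v$, but deserves explicit verification (for instance by isotoping $F_v$ to a small translation inside $D'_v$ after identifying the two disks).
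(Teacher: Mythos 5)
Your plan founders at the construction step, and the failure is exhibited by the very Index Lemma this section provides. You assert that whenever $D_v\neq D'_v$ you can extend the boundary homeomorphism $\partial D_v\to\partial D'_v$ to a homeomorphism $F_v:\overline{D_v}\to\overline{D'_v}$ that is fixed-point free on $\mathrm{int}(D_v)$. But if $D_v\subsetneqq D'_v$ (a case that cannot be excluded a priori, and which the paper's proof must confront head-on), Lemma \ref{Lem8}(a) says the boundary restriction has index $1$, and then Lemma \ref{Lem9} forces \emph{every} continuous extension to have a fixed point in the interior; ``some boundary point is genuinely displaced'' is irrelevant to this obstruction. Worse, the normalization gives $D_0=D'_0$ (as the paper observes at the start of its proof), and a self-map of the closed disk $\overline{D_0}$ can never be fixed-point free by Brouwer; if instead you take $F_v=\mathrm{id}$ there, the fixed-point set acquires a two-dimensional component and the naive Lefschetz--Hopf sum $\sum_z\mathrm{ind}(F,z)=2$ no longer applies. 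Separately, the bound $\mathrm{ind}(F,p_i)\geq 1$, which your contradiction $3>2$ hinges on, is exactly the step you concede is unproven, and it is not a routine computation: $p_i$ is a fixed tangency junction where $F$ is conformal only on two sectors and is an arbitrary Schoenflies extension on the two disks, so the local index genuinely depends on choices you have not pinned down. So both pillars of the argument --- the fixed-point-free interior extensions and the local index bound at the marks --- are unavailable.

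The paper's proof is structured precisely to avoid these traps, and it is instructive to compare. Rather than globalizing $f$ over the sphere, it works with the index of the restriction of $f$ to $\partial I$ only: assuming the marked disks satisfy strict inclusions such as $D_i\subsetneqq D'_i$, it post-composes $f$ with a linear map $az+b$ to remove fixed points from $\partial I$, and then Lemma \ref{Lem8} together with Proposition \ref{Prop4} forces the boundary index below $-1$, while conformality of $f$ on $I$ and Lemma \ref{Lem9} force it to be nonnegative --- contradiction, whence $D_i=D'_i$ for $i=0,1,2,3$ (a containment-reversal case is reduced to this one by Koebe uniformization and reflection). Only then, with the four marked disks equal and round, does it extend $f$ by repeated Schwarz reflection (an extension that is again conformal, not merely topological), locate an $f$-invariant circle $C_\gamma$ containing three fixed points, and show by the perturbation $f_\mu=f+\mu$ and Lemma \ref{Lem9} that the fixed set on $C_\gamma$ must contain an arc; the identity theorem for the analytic function $f(z)-z$ then gives $f=\mathrm{id}$. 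If you want to salvage your global approach you would have to prove the index-$\geq 1$ bound at the junctions and handle coinciding disks by a fixed-point-class (rather than pointwise) Lefschetz argument; as written, the proof does not go through.
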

\bigskip
In order to prove this lemma, the fixed points index
method is needed. Let us recall its definition.
Please refer to \cite{H-S,Steph} for more details.

Let $\gamma$ be a Jordan curve in the complex plane $\mathbb{C}$.
Suppose $f: \gamma \rightarrow \mathbb{C}$ be a continuous map
without fixed points. The $index(f)$ is defined to be the winding
number of $g\circ \gamma$ around the point $0$, where $g(z)=f(z)-z$
and $\gamma$ is parameterized in accordance with its orientation. In \cite{H-S}, He-Schramm established the following result.

\begin{lemma}\textbf{(Index Lemma)}\label{Lem8}
Let $J$,$J'$ be Jordan curves in $\mathbb C$, positively oriented
with respect to the Jordan domains that they bound; and let $f:J\to
J'$ be an orientation preserving homeomorphism with no fixed points.
Then

(a).If $J$ is contained in the closure of the Jordan domain
determined by $J'$, or $J'$ is contained in the closure of the
Jordan domain determined by $J$, then $index(f)=1$.

(b).If the intersection of $J$ and $J'$ contains at most 2 points, then $index(f)\geq 0$.
\end{lemma}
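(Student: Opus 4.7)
The plan is to compute $index(f)$ via the homotopy invariance of the winding number, combined with the Schoenflies theorem to bring one of the two curves into standard position. Throughout, I use that $index(f)$ is by definition the winding number about $0$ of the map $z \mapsto f(z) - z$, which is invariant under any homotopy of $f$ through continuous maps $J \to \mathbb{C} \setminus \{0\}$.

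For part (a), suppose $J \subset \overline{D'}$, where $D'$ is the Jordan domain bounded by $J'$. By Schoenflies, conjugate by an orientation-preserving homeomorphism of $\hat{\mathbb{C}}$ so that $J' = \{|w| = 1\}$ and $D' = \{|w| < 1\}$; both the hypotheses and $index(f)$ are preserved. Then $|f(z)| = 1$ while $|z| \leq 1$ for $z \in J$. The linear homotopy $H_s(z) = f(z) - (1-s)z$ avoids $0$: a zero would force $1 = |f(z)| = (1-s)|z| \leq 1 - s$, so $s = 0$ and $|z| = 1$; hence $z \in J \cap J'$ with $f(z) = z$, contradicting the no-fixed-point hypothesis. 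At $s = 1$, $H_1(z) = f(z)$ traces $J'$ once positively and therefore has winding number $1$ about $0 \in D'$. By homotopy invariance, $index(f) = 1$. The symmetric hypothesis $J' \subset \overline{D}$ follows by applying the above to $f^{-1} : J' \to J$, together with the identity $index(f^{-1}) = index(f)$: under $w = f(z)$, $f^{-1}(w) - w$ becomes $-(f(z) - z)$, and multiplication by $-1$ is orientation-preserving on $\mathbb{C} \setminus \{0\}$.

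For part (b), I split on $|J \cap J'|$. If $J \cap J' = \emptyset$, then either one Jordan domain lies inside the other (so part (a) gives $index(f) = 1$) or $\overline{D} \cap \overline{D'} = \emptyset$. In the disjoint sub-case, normalize $J'$ to the unit circle so that $|z| > 1$ on $J$ and $0 \notin \overline{D}$; the homotopy $H_s(z) = (1-s)f(z) - z$ avoids $0$ (a zero gives $|z| = 1 - s \leq 1$, contradicting $|z| > 1$), and $H_1(z) = -z$ has winding number $0$ about $0$, hence $index(f) = 0$. The tangential case $|J \cap J'| = 1$ is reduced to the others by approximating $J$ by Jordan curves $J_\varepsilon$ with $|J_\varepsilon \cap J'| \in \{0, 2\}$ and lifting $f$ to nearby fixed-point-free homeomorphisms $f_\varepsilon : J_\varepsilon \to J'$; continuity of the winding number gives $index(f) \geq 0$ in the limit.

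The main obstacle is the transverse case $|J \cap J'| = 2$. Write $J \cap J' = \{p, q\}$, so $J = \gamma_{\mathrm{in}} \cup \gamma_{\mathrm{out}}$ with $\gamma_{\mathrm{in}} \subset \overline{D'}$ and $\gamma_{\mathrm{out}} \subset \hat{\mathbb{C}} \setminus D'$. Neither linear homotopy from the easier cases extends globally: $H_s(z) = f(z) - (1-s)z$ can acquire zeros on $\gamma_{\mathrm{out}}$ (where $|z| > 1$), while the other homotopy fails on $\gamma_{\mathrm{in}}$. I would split the argument-change computation for $f(z) - z$ into contributions from $\gamma_{\mathrm{in}}$ and $\gamma_{\mathrm{out}}$, using the appropriate homotopy on each arc and carefully tracking the boundary values at $p$ and $q$ where the two homotopies disagree. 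The orientation-preserving, fixed-point-free nature of $f$ constrains the vectors $f(p) - p$ and $f(q) - q$ to point into specific half-planes determined by the tangents to $J'$; combined with the fact that the total argument change around $J$ must be an integer multiple of $2\pi$, a sign analysis in the spirit of the He--Schramm fixed-point index technique will rule out negative windings. This combinatorial sign bookkeeping is the most delicate step.
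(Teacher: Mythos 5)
First, a point of reference: the paper contains no proof of Lemma \ref{Lem8} at all --- it is quoted from He--Schramm \cite{H-S} --- so your attempt has to stand entirely on its own. Your part (a) is essentially correct: the Schoenflies normalization (you should add a word on why $index(f)$ survives conjugation by an orientation-preserving homeomorphism $\phi$ of the plane; the standard reason is that $\phi$ is isotopic to the identity through homeomorphisms $\phi_t$, and $\phi_t\circ f-\phi_t$ never vanishes, so the winding number is constant in $t$), the homotopy $H_s(z)=f(z)-(1-s)z$, and the identity $index(f^{-1})=index(f)$ all check out. The empty-intersection sub-case of (b) is also fine. In fact your homotopies prove more than you claim: whenever all of $J\setminus J'$ lies on one side of $J'$, a zero of the appropriate linear homotopy forces $s=0$ and $|z|=1$, i.e.\ a fixed point at a point of $J\cap J'$, which is excluded by hypothesis. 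So the tangency case $|J\cap J'|=1$ and the two-point case with both arcs on the same side follow directly ($J\subset\overline{D'}$ is part (a); $J$ outside gives $index(f)=$ the winding number of $J$ about $0$, which is $0$ or $1$). Your $\varepsilon$-approximation detour for the one-point case is therefore unnecessary --- and also unsound as stated, since it reduces to the two-point case, which you have not proved, and the perturbation of $J$ and $f$ preserving all hypotheses is not spelled out. A related slip: two intersection points do not force the configuration $\gamma_{\mathrm{in}}\cup\gamma_{\mathrm{out}}$; both arcs may lie on the same side.

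The genuine gap is the one configuration that remains: $|J\cap J'|=2$ with one arc of $J$ inside $D'$ and one outside. This is the entire content of part (b) --- it is exactly what Proposition \ref{Prop4} needs, since two distinct $K$-circles can cross at two points --- and your proposal stops at a declaration of intent there. Splitting the argument change of $f(z)-z$ over the two arcs and ``using the appropriate homotopy on each'' computes nothing as stated: the two homotopies disagree at the common endpoints $p,q$; the argument change along an open arc is a real number, not a homotopy invariant, and depends on the actual boundary values; and the claimed constraint on the directions of $f(p)-p$ and $f(q)-q$ is asserted, not derived. Appealing to ``a sign analysis in the spirit of the He--Schramm fixed-point index technique'' is circular in a blind proof, because that bookkeeping is precisely the nontrivial part of their argument. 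Verdict: part (a) is right, the same-side and disjoint cases of (b) are right (indeed more easily than you realized), but part (b) in its essential crossing case is missing, so the proof as proposed is incomplete.
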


As an immediate result, we have:

\begin{proposition}\label{Prop4}
If $J,\: J'$ are $K$-circles, then $index(f)\geq0$.
\end{proposition}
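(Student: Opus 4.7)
The plan is to reduce the claim directly to part (b) of the Index Lemma by showing that any two distinct $K$-circles meet in at most two points, with the coincident case handled separately by part (a).

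First I would unwind the definition: a $K$-circle is the boundary (in $\partial K$) of a set of the form $H^{+}\cap \partial K$, where $H^{+}$ is an affine half-space meeting the interior of $K$. Equivalently, it is the intersection $H\cap\partial K$ of the bounding affine plane $H$ with the smooth strictly convex surface $\partial K$. Pushing forward by the stereographic projection $h\colon \partial K\to\hat{\mathbb C}$, the two $K$-circles $J,J'\subset\hat{\mathbb C}$ correspond bijectively to two such planar sections; since $h$ is a diffeomorphism, $|J\cap J'|$ equals the cardinality of the intersection on $\partial K$.

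Next, I would analyze this intersection geometrically. Let $H,H'$ be the affine planes associated to $J,J'$. If $H=H'$, then $J=J'$ and $f$ is a fixed-point-free self-homeomorphism of a single Jordan curve; since $J$ is trivially contained in the closure of the Jordan domain bounded by $J'=J$, part (a) of the Index Lemma gives $\mathrm{index}(f)=1\geq 0$. If $H\neq H'$ and they are parallel, then $H\cap H'=\emptyset$ and hence $J\cap J'=\emptyset$. Otherwise $H\cap H'$ is a line $\ell\subset\mathbb R^{3}$, and strict convexity of $K$ together with smoothness of $\partial K$ forces $|\ell\cap\partial K|\leq 2$: a third intersection point would produce a chord of $\partial K$ containing an interior intersection point, contradicting strict convexity. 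In all cases $|J\cap J'|\leq 2$, so part (b) of the Index Lemma yields $\mathrm{index}(f)\geq 0$.

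There is no real obstacle here; the only point deserving care is the verification that any line meets $\partial K$ in at most two points, which is the strict-convexity input used throughout the paper. This completes the proposal.
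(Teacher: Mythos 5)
Your proof is correct and takes exactly the route the paper intends: the paper states Proposition \ref{Prop4} as an immediate consequence of Lemma \ref{Lem8}, and your argument simply makes explicit the underlying geometric fact, namely that two distinct $K$-circles are planar sections of $\partial K$ whose common points lie on the line $H\cap H'$, which meets the strictly convex surface $\partial K$ in at most two points, so that part (b) applies (with the coincident case handled by part (a), and the stereographic projection preserving intersection cardinalities). No gap remains; you have just written out what the paper leaves implicit.
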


Assume that $f:A\to\mathbb{C}$ is continuous, where $A\subset \mathbb
C$. Given an isolated fixed point $z \in int(A)$ (the
interior of $A$) of $f$, then there exists a closed disk $D$
that contains $z$ in its interior, but does not contain any other
fixed point of $f$. The index of $f$ at $z$, denoted by $index(f,z)$, is defined as the restriction of $f$ to $\partial D$, where $\partial D$ is positively oriented with respect to $D$.

 In order to prove Lemma \ref{Lem7}, we shall use the following
result as well. Please refer to \cite{H-S}.

\begin{lemma}\label{Lem9}\textbf{(Poincar\'{e}-Hopf)}
Let $A \subset \mathbb C$ be a compact set whose boundary consists
of finitely many disjoint Jordan curves. Assume that its boundary
components is positively oriented with respect to $A$. Suppose that
$f:A\to \mathbb C$ is continuous, has only isolated fixed points and
has no fixed points on the boundary of $A$. Then the index of the
restriction of $f$ to $\partial A$ is equal to the sum of the
indices of $f$ at all its fixed points.
\end{lemma}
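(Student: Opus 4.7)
The plan is to reduce the statement to the classical fact that a continuous map from a compact planar region into $\mathbb{C}\setminus\{0\}$ has zero total winding around its positively oriented boundary.

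First I would set $g(z):=f(z)-z$, so that fixed points of $f$ are exactly the zeros of $g$, and for any Jordan curve in $A$ missing these zeros the index of $f$ along the curve coincides with the winding number of $g\circ\gamma$ around $0$. Since fixed points are isolated and $A$ is compact, there are only finitely many of them, say $z_{1},\ldots,z_{k}$, and by hypothesis all lie in $\mathrm{int}(A)$. Choose small, pairwise disjoint closed round disks $D_i\subset\mathrm{int}(A)$ centered at $z_i$ such that $z_i$ is the only fixed point of $f$ in $D_i$, and form
$$A':=A\setminus\bigcup_{i=1}^{k}\mathrm{int}(D_i).$$
Then $g$ is continuous and nowhere zero on $A'$, and $\partial A'$ consists of $\partial A$ together with the circles $\partial D_i$.

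The heart of the argument is to show that, with boundary components of $A'$ positively oriented, the sum of the winding numbers of $g$ around them vanishes; equivalently, writing $h:=g/|g|:A'\to S^{1}$, that $\sum_{C\subset\partial A'}\deg(h|_{C})=0$. The cleanest way to see this is homological: in the long exact sequence of the pair $(A',\partial A')$ the fundamental class $[A',\partial A']\in H_{2}(A',\partial A')$ maps to $\sum_{C}[C]\in H_{1}(\partial A')$, whose image in $H_{1}(A')$ is zero; applying $h_{*}:H_{1}(A')\to H_{1}(S^{1})=\mathbb{Z}$ yields the displayed identity. A more hands-on alternative is to approximate $h$ by a smooth map and apply Stokes' theorem to the closed $1$-form $h^{*}(d\theta/2\pi)$ on the compact $2$-manifold-with-boundary $A'$, which integrates to zero since it is exact on a simply connected cover patchwork and $d\bigl(h^{*}(d\theta)\bigr)=0$.

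Finally I would keep track of orientations. As boundary components of $A'$ the circles $\partial D_i$ are oriented \emph{clockwise}, opposite to their orientation as boundaries of the disks $D_i$; the winding of $g$ along $\partial D_i$ taken with the latter (counter-clockwise) orientation is, by definition, $\mathrm{index}(f,z_i)$. Substituting into the vanishing identity and flipping the signs of the contributions from the $\partial D_i$ rearranges to
$$\mathrm{index}\bigl(f|_{\partial A}\bigr)=\sum_{i=1}^{k}\mathrm{index}(f,z_i),$$
which is the desired conclusion. The one step requiring care is the vanishing of the total degree of $h$ along $\partial A'$; once this is granted (either by the homological argument or via smoothing and Stokes' theorem), the remainder of the proof is orientation bookkeeping.
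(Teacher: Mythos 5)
Your proof is correct. Note that the paper does not prove this lemma at all --- it defers to He--Schramm \cite{H-S} --- and your argument (finitely many fixed points by compactness, excise small disjoint disks $D_i$ to get $A'$ on which $g(z)=f(z)-z$ is non-vanishing, show the total winding of $g/|g|$ over the positively oriented $\partial A'$ vanishes, then flip the orientation of each inner circle $\partial D_i$ to recover $\mathrm{index}(f,z_i)$) is precisely the classical proof underlying that reference, so there is no gap in substance. One remark on your two alternatives for the key vanishing step: the homological route via the long exact sequence of $(A',\partial A')$ is the right one here, because the boundary components are merely Jordan curves and need not be smooth; the Stokes variant would require first smoothing the boundary via Schoenflies collars, whereas the Schoenflies theorem already guarantees that $A'$ is a compact oriented topological surface with boundary, which is all the fundamental-class argument needs.
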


 Let us give the proof of the Rigidity Lemma.
\begin{proof}[\textbf{Proof of Lemma \ref{Lem7}}]
Suppose $\mathscr{O}=\{v_0,e_1,e_2,e_3\}$ is the combinatorial frame associated to $G$. There exist four special $K$-disks $D_0,D_1,D_2,D_3$ of
the packing $\mathcal P_K$ corresponding to $\mathscr{O}$. More
precisely, $D_0=D(v_0)$, $D_i=D(v_i)$ and $e_i=[v_0, v_i]$ for
$i=1,2,3$. Similarly, we have the corresponding disks
$D'_0,D'_1,D'_2,D'_3$ for the packing $\mathcal P'_K$.

Evidently, we have $D_0=D'_0$. Furthermore, we claim that:
\[
D_1=D'_1,\ D_2=D'_2,\ D_3=D'_3.
\]
Otherwise, without loss of generality, we suppose that
 \[
 D_1\subsetneqq D'_1, \ D_2\subsetneqq D'_2, \ D_3\subset D'_3( D_3 \supset D'_3),
 \]
 or
 \[
  D_1\subsetneqq D'_1,\  D_2\subset D'_2(D_2\supset D'_2),\ D_3=D'_3.
  \]

 By means of the fixed point index method, we shall show that both cases lead to contradiction.

  In the first case, according to the normalization assumption, we know that $f$ has fixed points on the boundary $\partial I$ of some $I$.
By post-compositing $f$ with proper linear mapping $az+b$, where $a,b \in \mathbb
C$, we may assume the resulting map $f_{a,b}$ has no fixed points in
$\partial I$, and $f_{a,b}(D_i)\subsetneqq int(D'_i)$ (the interior of $D'_i$) for $i=0,1,2$. Recalling Lemma \ref{Lem8} and Proposition
\ref{Prop4}, a simple computation shows that the index of the
restriction of $f_{a,b}$ to $\partial I$ is less than $-1$. However, because
$f_{a,b}$ is conformal in $I$, by Lemma
\ref{Lem9}, that's impossible.

 For the latter case, due to Koebe's Uniformization \cite{H-S}, we assume that $D_0= D'_0, D_3=D'_3$ are all standard disks.
That means the boundary of these disks are all "real" circles, by
reflection principle, we transform this case to the former one.

Thus we deduce that $D_1=D'_1,\ D_2=D'_2,\ D_3=D'_3$.
Similarly, we assume they are all standard disks.
 With repeated applications of the reflection principle, we shall obtain an extended conformal mapping between the new interstice sets. For ease of notations, we still denote them by $f$, $I$ and $I'$ respectively.

  It's not hard to see that there exists a  circle $C_{\gamma}\subset \bar{I}\cap\bar{I'}$ which is invariant under $f$. Namely, $f(C_{\gamma})=C_{\gamma}$. Moreover, we could assume that $C_{\gamma}$ contains fixed points $u_1,u_2,p_3$, where $u_1,u_2$ are obtained from $p_1,p_2$ by reflections. Denote by $\mathcal{I}\subset C_{\gamma}$ the set of the fixed points $z\in C_{\gamma}$. Note  that each connected component of $\mathcal{I}$ is either a single point, or a closed arch. We claim $\mathcal{I}$ contains at least one closed arch.

Otherwise, without loss of generality, assume that $\mathcal{I}$ consists of  $u_1,u_2,p_3$.  Using the M\"{o}bius transformation which maps $u_1,u_2,p_3$ into $0,1,\infty$, we identify $C_{\gamma}$ with the extended real line $\bar{\mathbb {R}}=\mathbb {R} \cup\{\infty\}$.  Then the restriction of $f$ to $\mathbb {R}$, denoted by $f(x)$, is a real continuous function with fixed points $0,1$.
Let $g(x)=f(x)-x$. We could choose $\varepsilon>0$ sufficiently small such that one of the following two cases occurs:
\begin{itemize}
\item{}$g(-\varepsilon)g(\varepsilon)<0$
\item{}$g(-\varepsilon)g(\varepsilon)>0$.
\end{itemize}

 Perturbing $g(x)$ to suitable $g_{\mu}(x)=g(x)+\mu$, where $0\neq\mu\in \mathbb R$, we shall find either a zero point of $g_\mu$ in $(-\varepsilon,\varepsilon)$ for the first case, or two in the same interval under the second case.  Let us treat another fixed point $1$ of $f(x)$ similarly. Then there exist at least one zero point of $g_\mu$ in $(1-\varepsilon,1+\varepsilon)$ accordingly. Thus, to summarise, we infer that the map $f_\mu(z)=f(z)+\mu$ has at least two fixed points $z_1,z_2$ in $I$. In view of Lemma \ref{Lem9}, a perturbation method implies that this would lead to contradiction.

  Hence we have shown the claim that $\mathcal I\subset C_\gamma\subset I$  contains at least one closed arch. That means the analytic function $g(z)=f(z)-z$ has non-isolated zero point. Therefore, $f(z)=z$. Eventually, we complete the proof of the lemma.
\end{proof}

\end{document}